%%%%%%%%%%%%%%%%%%%%%%%%%%%%%%%%%%%%%%%%%%%%
%
%  BBM-ANALYTIC RADIUS
%
%%%%%%%%%%%%%%%%%%%%%%%%%%%%%%%%%%%%%%%%%%%%
\documentclass[12pt]{amsart}
\usepackage[pagewise]{lineno}\linenumbers
\usepackage{tikz}
\usepackage{amssymb}
\usepackage{amsmath}
\usepackage{mathrsfs}
\usepackage{mathbbol}
\usepackage{amsfonts}
\usepackage{amssymb,amsmath}
\usepackage{graphicx}
\graphicspath{{./images/}}
\oddsidemargin=-.0cm
\evensidemargin=-.0cm
\textwidth=16cm
\textheight=22cm
\topmargin=0cm
%%%%%%%%%%%%%%%%%%%%%%%%%%%%%%%%%%%%%%%%%%%%
% DEFS

\def\R {\mathbb{R}}

\def\d{{\,\rm d}}

%%%%%%%%%%%%%%%%%%%%%%%%%%%%%%%%%%%%%%%%%%%%

%%%%%%%%%%%%%%%%%%%%%%%%%%%%%%%%%%%%%%%%%%%%
\newtheorem{proposition}{Proposition}[section]
\newtheorem{theorem}[proposition]{Theorem}

\newtheorem{lemma}[proposition]{Lemma}
\theoremstyle{definition}

\newtheorem{remark}[proposition]{Remark}
\numberwithin{equation}{section}
%%%%%%%%%%%%%%%%%%%%%%%%%%%%%%%%%%%%%%%%%%%%
% BIBLIOGRAPHY

\def \no#1#2#3 {{\bf #1} (#3), #2.}
%\no{Vol}{Pag}{Year}
\def \eds#1#2#3 {#1, #2, #3.}
%\eds{Pub}{City}{Year}
%%%%%%%%%%%%%%%%%%%%%%%%%%%%%%%%%%%%%%%%%%%%%%%%%

%%%%%%%%%%%%%%%%%%%%%%%%%%%%%%%%%%%%%%%%%%%%%%%%%
\title[ Analytic radius for BBM equation]
{ \bf
Improved lower bounds of analytic radius for the Benjamin-Bona-Mahony equation}

\author[M. Wang]
{Ming Wang}

%\thanks{\\ School of Mathematics and Physics, China University of Geosciences, Wuhan, 430074, P.R. China;\\
% and Center for Mathematical Sciences, China University of Geosciences Wuhan, 430074, P.R. China.\\
%  Email: mwang@cug.edu.cn}

\address{Ming Wang
\newline\indent
School of Mathematics and Physics, China University of Geosciences
\newline\indent
Wuhan, 430074, P.R. China and
\newline\indent Center for Mathematical Sciences, China University of Geosciences
\newline\indent Wuhan, 430074, P.R. China
}
\email{mwang@cug.edu.cn}

\subjclass[2010]{35Q53, 35B40}

%35Q53 View Publications (1991-now) KdV equations (Korteweg-de Vries equations) {For dynamical systems and ergodic theory, see 37K10}
%35A20 (1973-now) Analyticity in context of PDEs
\keywords{Analytic radius, BBM equation, Shallow water wave models}

%%%%%%%%%%%%%%%%%%%%%%%%%%%%%%%%%%%%%%%%%%%%%%%%%

%%%%%%%%%%%%%%%%%%%%%%%%%%%%%%%%%%%%%%%%%%%%%%%%%
\begin{document}

\begin{abstract}
This paper is devoted to the spatial analyticity of the solution of the BBM equation on the real line with an analytic initial data. It is shown that the analytic radius has a lower bound like $t^{-\frac{2}{3}}$ as time $t$ goes to infinity, which is an improvement of previous results. The main new ingredient is a higher order almost conservation law in analytic spaces. This is proved by introducing an equivalent analytic norm with smooth symbol and establishing some algebra identities of higher order polynomials.
\end{abstract}

\maketitle

%%%%%%%%%%%%%%%%%%%%%%%%%%%%%%%%%%%%%%%%%%%%%%%%%

\section{Introduction and main results}

In this paper, we are interested in the long time behavior of the spatial analytic radius for the Benjamin-Bona-Mahony (BBM) equation on the real line $\R$
\begin{align}\label{bbm}
u_t-u_{txx} +  u_x + uu_x=0, \quad u(0,x)=u_0(x).
\end{align}
Here $u(t,x)$ is a real-valued function of $(t,x)\in \R^2$, and we adopt the notation $u_{txx}=\partial_t\partial_x^2u$. The equation \eqref{bbm} was introduced in \cite{bona72} to study the dynamics of small-amplitude surface water waves propagating unidirectionally.  The BBM equation \eqref{bbm} admits the $H^1(\R)$ conservation law, namely
\begin{align}\label{equ-intro-law}
\int  (u^2+u_x^2)(t)\d x = \int  (u_0^2+u_{0x}^2)\d x, \quad \forall t\in \R.
\end{align}
Based on the identity \eqref{equ-intro-law}, it is shown in \cite{bona09,pan11,wangm16,Dai,Ban} that the BBM equation \eqref{bbm} is globally well posed in $H^s(\R)$ for all $s\geq 0$, and ill poseded in $H^s(\R)$ for $s<0$.

Recently, there is a growing interest in the well posedness of dispersive equations in analytic (or Gevrey) spaces $G^\sigma(\R)$. Here, for every $\sigma>0$, $G^\sigma(\R)$ denotes the Banach space endowed with the norm
$$
\|u\|_{G^\sigma}=\left(\int_\R e^{2\sigma|\xi|}|\widehat{u}(\xi)|^2\d \xi \right)^{\frac{1}{2}}.
$$
According to the Paley-Wiener theorem \cite{Ka}, every function belongs to $G^\sigma(\R)$ can be extend an analytic function in the strip $\{z\in \mathbb{C}: |\mbox{Im } z|<\sigma\}$. The local well posedness in $G^\sigma$ can be proved similarly as that in Sobolev space $H^s(\R)$. But the global well posedness in $G^\sigma$ is far from clear. The main difficulty lies in that there is no conservation law as \eqref{equ-intro-law} in analytic space $G^\sigma$.

There are a few works devoted to the following more accessible question
$$
\mbox{ If } u_0\in G^{\sigma_0}, \mbox{ then for what kind of } \sigma(t) \mbox{ such that } u(t)\in G^{\sigma(t)}?
$$
In \cite{BG}, Bona and Gruji\'{c} proved that the analytic radius of solutions to \eqref{bbm} satisfies
\begin{align}\label{equ-lower-1}
\sigma(t)\geq c\,t^{-1}, \quad t\to \infty.
\end{align}
In the proof, it is shown that the solution satisfies some energy inequalities in terms of $G^{\sigma(t)}$ type norms, thus the solution $u(t)$ belongs to $G^{\sigma(t)}$ provided that $\sigma(t)$ enjoys an ordinary equation (ODE), solving the ODE gives the lower bound \eqref{equ-lower-1}. Very recently, Himonas and Petronilho \cite{HP} established the same analytic radius bound as \eqref{equ-lower-1} to the BBM equation on torus. The strategy in  \cite{HP} is as follows:
\begin{itemize}
  \item[(1)] Prove a local well-posedness in $G^{\sigma,1}$ (see \eqref{equ-ana-norm-1} for definition) with a lifespan $\delta>0$;
  \item[(2)] Establish an almost conservation law in $G^{\sigma,1}$, namely\footnote{In fact, letting $\sigma$ go to $0$, the inequality \eqref{equ-intro-1} reduces to the conservation law \eqref{equ-intro-law}.}
\begin{align}\label{equ-intro-1}
\|u(\delta)\|^2_{G^{\sigma,1}}\leq \|u_0\|^2_{G^{\sigma,1}}+ C\sigma\|u_0\|^2_{G^{\sigma,1}};
\end{align}
  \item[(3)] Use repeatedly the local well-posedness and the almost conservation law on the intervals $[0, \delta], [\delta, 2\delta], \cdots$, and show that the solution satisfies \eqref{equ-lower-1}.
\end{itemize}
This approach is introduced by Selberg and Tesfahun in \cite{ST15}, can be understood a variant $I$-method \cite{Tao} in analytic spaces. The method is powerful and has been used to establish analytic radius lower bounds for KdV equations \cite{Sel17,Tes17,Tes19,wangm19,wangm22}, KdV-BBM equations \cite{pan20,Tes22-N} and  other dispersive equations \cite{Seo20,Seo21,H21,HP21,shan,liu-wang22}. For more results on the analytic radius, we refer to the survey \cite{HP20}.

The main goal in this paper is give a refinement of the bound \eqref{equ-lower-1}.

\begin{theorem}\label{thm-1}
Assume that $u_0\in G^{\sigma_0}$ with some $\sigma_0>0$. Then the BBM equation \eqref{bbm} has a global solution $u\in C([0,\infty);G^{\sigma(t)})$ with
$$
\sigma(t)\geq c\,t^{-\frac{2}{3}}, \qquad t\to \infty,
$$
where $c>0$ is a constant depending only on $\sigma_0$ and $\|u_0\|_{G^{\sigma_0}}$.
\end{theorem}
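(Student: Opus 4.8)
The plan is to follow the almost-conservation strategy of Selberg--Tesfahun \cite{ST15} used in \cite{HP}, but to replace the first-order law \eqref{equ-intro-1} by a sharper, higher-order one. Concretely, I would first record a local well-posedness statement in $G^{\sigma,1}$ with lifespan $\delta>0$ depending only on $\|u_0\|_{G^{\sigma,1}}$ and not on $\sigma$; since the energy will be shown to stay comparable to its initial value along the iteration, $\delta$ may be taken uniform in the number of steps. The heart of the matter is then an almost conservation law of the form
\begin{align*}
\|u(\delta)\|_{G^{\sigma,1}}^2 \le \|u_0\|_{G^{\sigma,1}}^2 + C\,\sigma^{3/2}\,\|u_0\|_{G^{\sigma,1}}^2 .
\end{align*}
Running this on the successive intervals $[0,\delta],[\delta,2\delta],\dots$ and bootstrapping $\|u(k\delta)\|_{G^{\sigma,1}}^2\le 2\|u_0\|_{G^{\sigma,1}}^2$ forces $(T/\delta)\,\sigma^{3/2}\lesssim 1$, i.e. $\sigma\gtrsim (\delta/T)^{2/3}$, which is exactly the claimed $t^{-2/3}$ rate. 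Thus everything reduces to gaining the extra half-power of $\sigma$ over the exponent $1$ implicit in \eqref{equ-intro-1}.

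To produce this law I would differentiate the energy in time. Writing the equation as $u_t=-(1-\partial_x^2)^{-1}\partial_x\big(u+\tfrac12 u^2\big)$ and computing the time derivative of $\|u\|_{G^{\sigma,1}}^2$, the linear part is skew-adjoint and contributes nothing, while the feature special to BBM is that the $H^1$-weight $(1+\xi^2)$ in the norm is cancelled by the smoothing factor $(1+\xi^2)^{-1}$ coming from the equation. This reduces the derivative to a single cubic integral over $\{\xi_1+\xi_2+\xi_3=0\}$ whose multiplier is essentially
\begin{align*}
S(\xi_1,\xi_2,\xi_3)=\xi_1 e^{2\sigma|\xi_1|}+\xi_2 e^{2\sigma|\xi_2|}+\xi_3 e^{2\sigma|\xi_3|},
\end{align*}
which vanishes identically when $\sigma=0$ (this is the $H^1$ conservation law \eqref{equ-intro-law}). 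The new ingredient is to read $S$ through the algebraic identity $S=h(\xi_1)+h(\xi_2)-h(\xi_1+\xi_2)$ with $h(\xi)=\xi\big(e^{2\sigma|\xi|}-1\big)$, i.e. as the defect of additivity of the odd function $h$; a second-order (double-integral) representation then yields $|S|\lesssim \sigma\,|\xi_1|\,|\xi_2|$ up to exponential weights, and by relabeling $|S|\lesssim \sigma\,\min(|\xi_1||\xi_2|,|\xi_2||\xi_3|,|\xi_1||\xi_3|)$, which is genuinely bilinear in the two lower frequencies rather than the quadratic bound $\sigma(\xi_1^2+\xi_2^2+\xi_3^2)$ underlying \eqref{equ-intro-1}. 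It is here that I would pass to an equivalent norm with a smooth symbol comparable to $e^{2\sigma|\xi|}$, so that $S$, its derivatives, and any correction multipliers stay bounded with the right powers of $\sigma$, whereas the corner of $e^{\sigma|\xi|}$ at the origin obstructs the refined estimates.

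From here I see two routes to the displayed law. The direct one is a sharp trilinear estimate bounding the cubic integral by $\sigma^{3/2}\|u\|_{G^{\sigma,1}}^3$ after distributing the analytic weights and the $\langle\xi\rangle^{-1}$ smoothing of the BBM operator across the three factors and summing over frequency configurations. The alternative, matching the "higher-order" language, is to gauge the cubic term into a correction: writing the cubic time-derivative as $\tfrac{\d}{\d t}(\text{cubic correction})+(\text{quartic remainder})$ requires dividing by the BBM resonance function $\Omega(\xi_1,\xi_2,\xi_3)=\sum_{j}\xi_j/(1+\xi_j^2)$, and hence understanding its zero set on $\{\xi_1+\xi_2+\xi_3=0\}$ via a polynomial identity obtained after clearing denominators — this is where the algebra of higher-order polynomials enters. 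Either way, integrating over $[0,\delta]$ and absorbing $\delta$ together with the bootstrapped norm into $C$ gives the law, and feeding it into the iteration and solving the resulting difference inequality yields $\sigma(T)\gtrsim T^{-2/3}$.

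The main obstacle I expect is the sharp almost conservation law itself, namely extracting the full extra half-power of $\sigma$ uniformly over all frequency interactions. The dangerous configurations are those in which two frequencies nearly cancel, so the output frequency $\xi_3=-(\xi_1+\xi_2)$ is small and the analytic weight $e^{-\sigma|\xi_3|}$ provides no gain; controlling precisely these interactions is what forces the use of the smooth symbol and of the algebraic identity for $S$ (and, on the second route, of the factorization of $\Omega$) rather than the crude triangle-inequality bound behind the earlier rate \eqref{equ-intro-1}. Once this estimate is secured, the remaining steps — the iteration, the bootstrap, and the passage from the step inequality to the decay of $\sigma(t)$ — are routine.
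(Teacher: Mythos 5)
Your overall architecture --- local well-posedness with lifespan depending only on the norm of the data, an almost conservation law with gain $\sigma^{3/2}$, and the iteration forcing $(T/\delta)\,\sigma^{3/2}\lesssim 1$ --- is exactly the paper's, and that part of your proposal is fine. The gap is in the one estimate that carries the whole improvement: your multiplier bound. Writing $S=h(\xi_1)+h(\xi_2)-h(\xi_1+\xi_2)$ with $h(\xi)=\xi\big(e^{2\sigma|\xi|}-1\big)$ and using the double-integral representation $S=-\int_0^{\xi_1}\!\int_0^{\xi_2}h''(s+t)\,\d s\,\d t$ gives $|S|\lesssim \sigma|\xi_1||\xi_2|$ precisely because $h''(0^{\pm})=\pm 4\sigma\neq 0$, and this is sharp for that $h$: for $(\xi_1,\xi_2,\xi_3)=(2,-1,-1)$ one has $S=4\sigma+O(\sigma^2)$. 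A bound that is first order in $\sigma$ and only bilinear in the two low frequencies yields $\frac{\d}{\d t}\|u\|^2_{G^{\sigma,1}}\lesssim \sigma\|u\|^3_{G^{\sigma,1}}$, i.e. exactly the old law \eqref{equ-intro-1} and the old rate $t^{-1}$; testing against $|\xi_2|\sim|\xi_3|\sim 1$ with $\sigma\to 0$ shows the exponential weights are $O(1)$ there and cannot recover the missing half power of $\sigma$. Your remark about passing to a smooth equivalent symbol points in the right direction but does not by itself repair this: what is needed is not smoothness alone but \emph{evenness} of the symbol in $\xi$, so that the term linear in $\sigma|\xi|$ --- the one producing $\sigma\sum_i\xi_i|\xi_i|$, which does \emph{not} vanish on $\xi_1+\xi_2+\xi_3=0$ --- is absent from the expansion.

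Concretely, the paper takes $m(\xi)=\cosh(\sigma\xi)$, so $m^2(\xi)-1=\frac12\sum_{k\geq 1}\frac{(2\sigma\xi)^{2k}}{(2k)!}$ contains only even powers, and after symmetrization the multiplier is $\sum_{k\geq1}\frac{(2\sigma)^{2k}}{(2k)!}\big(\xi_1^{2k+1}+\xi_2^{2k+1}+\xi_3^{2k+1}\big)$. The two ingredients your proposal is missing are then: (a) the algebraic identity of Lemma \ref{lem-factor}, $\xi_1^{2k+1}+\xi_2^{2k+1}+\xi_3^{2k+1}=\xi_1\xi_2\xi_3\sum_{i+j=2k-2}(\cdots)$ on the hyperplane, which produces a genuinely \emph{trilinear} factor $\xi_1\xi_2\xi_3$ with coefficient $O(\sigma^{2k})$ (leading term $3\sigma^2\xi_1\xi_2\xi_3$), rather than your bilinear $\sigma\min|\xi_i||\xi_j|$; and (b) the bookkeeping of Lemma \ref{lem-fa-b}: one retains $|\xi_1\xi_2\xi_3|^{5/6}$ --- the exponent $5/6$ being dictated by the Sobolev embedding $\|D^{5/6}f\|_{L^3}\lesssim\|Df\|_{L^2}$, so that each of the three $H^1$-weighted factors absorbs exactly $5/6$ of a derivative in an $L^3\times L^3\times L^3$ estimate --- and converts the leftover $\sigma^{1/2}|\xi_1\xi_2\xi_3|^{1/6}=|(\sigma\xi_1)(\sigma\xi_2)(\sigma\xi_3)|^{1/6}$ into the exponential weight by a Stirling-type summation over $k$. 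This trade of the unused $1/6$ derivative per factor for $\sigma^{1/2}$ is precisely where the exponent $3/2$ comes from. Without (a) and (b), or a quantitative substitute for them, your argument proves \eqref{equ-intro-1} but not \eqref{equ-intro-2}, and hence only the known bound $\sigma(t)\gtrsim t^{-1}$.
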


The proof of Theorem \ref{thm-1} still relies on the almost conservation law strategy. The main new ingredient is to establish a higher order conservation law (compared to \eqref{equ-intro-1})
\begin{align}\label{equ-intro-2}
\|u(\delta)\|^2_{G^{\sigma,1}}\leq \|u_0\|^2_{G^{\sigma,1}}+ C\sigma^{\frac{3}{2}}\|u_0\|^2_{G^{\sigma,1}}.
\end{align}
Roughly speaking, to obtain a better conservation law, one should exploit as possible as we can the cancelation property of the nonlinear term $uu_x$ in analytic space $G^\sigma$. To this end, we first introduce an equivalent norm
%\footnote{The norm has been used in our previous work \cite{wangm19}.}
$$
\|u\|_{G^\sigma}\sim \|\cosh(\sigma \xi)\widehat{u}\|_{L^2(\R)},
$$
where $\cosh \xi$ is the hyperbolic cosine function, namely
\begin{align}\label{equ-intro-m}
\cosh(\sigma\xi)=\frac{1}{2}(e^{\sigma \xi}+e^{-\sigma \xi})=\sum_{k=0}\frac{\sigma^{2k}}{(2k)!}\xi^{2k}.
\end{align}
Then we reduce  the cancelation of the nonlinear term to that of the polynomial
$$
\xi_1^{2k+1}+\xi_2^{2k+1}+\xi_3^{2k+1} \quad \mbox{ on the hyperplane }\quad \xi_1+\xi_2+\xi_3=0.
$$
After a delicate factorization of the polynomial, we get the conservation law \eqref{equ-intro-2}. In this way, we obtain the improvement lower bound $\sigma(t)\geq ct^{-2/3}$.

Very recently, Dufera, Mebrate and Tesfahun \cite{Tes22,Tes22-b} presented a new idea to establish higher order conservation law. In fact, they introduced the equivalent norm (compared with \eqref{equ-intro-m})
$$
\|u\|_{G^\sigma} \sim \left\|\cosh (\sigma |\xi|)\widehat{u} \right\|_{L^2(\R)}
$$
and deduced the almost conservation law based  on the inequality
$$
\cosh (\sigma |\xi|)-1\leq (\sigma|\xi|)^{2\alpha} \cosh(\sigma|\xi|), \quad 0\leq \alpha\leq 1.
$$
But, applying the approach, it seems not easy to deduce the analytic radius lower bound $t^{-\frac{2}{3}}$ in Theorem \ref{thm-1}, see Remark \ref{rem-Tes}.

The paper is organized as follows. We prove Theorem \ref{thm-1} in Section 2 with aid of an almost conservation law, which is established in Section 3. In Section 4, we apply our method to the fractional BBM equation.

Finally, we say a few words about the notations. Throughout, for two nonnegative quantities $A,B$, we write $A\lesssim B$ if $A\leq CB$ for some unimportant constant $C>0$. If both $A\lesssim B$ and $B\lesssim A$ hold, we shall write $A\sim B$. The Fourier transform is given by
$$
\widehat{f}(\xi) = \int_\R e^{-ix\xi}f(x)\d x.
$$
For every $s\in \R$, the Sobolev space $H^s(\R)$ is the Banach space endowed with the norm
$$
\|f\|_{H^s}=\left(\int_\R (1+|\xi|)^{2s}|\widehat{f}(\xi)|^2\d \xi \right)^{\frac{1}{2}}.
$$

\section{Proof of Theorem \ref{thm-1}: Reduction to almost conservation law}

Let $\sigma>0$. Define a Fourier multiplier operator
\begin{align}\label{equ-law-1}
Iu = \mathcal {F}^{-1}\Big( m(\xi)\widehat{u}(\xi) \Big),
\end{align}
where $\mathcal {F}^{-1}$ denotes the inverse of Fourier transform, and the symbol
\begin{align}\label{equ-law-2}
m(\xi)= \frac{1}{2}(e^{\sigma \xi}+e^{-\sigma \xi}), \quad \xi \in \R.
\end{align}
Clearly, we have the bound
\begin{align}\label{equ-law-3}
\frac{1}{2}e^{\sigma|\xi|}\leq m(\xi)\leq e^{\sigma|\xi|}, \quad \xi \in \R.
\end{align}
It follows from \eqref{equ-law-3} that
\begin{align}\label{equ-loc-1}
\frac{1}{2}\|u\|_{G^\sigma}\leq \|Iu\|_{L^2(\R)}\leq \|u\|_{G^\sigma}.
\end{align}
So $\|Iu\|_{L^2(\R)}$ is an equivalent norm with $\|u\|_{G^\sigma}$, but $I$ is defined by a smooth Fourier symbol, this is helpful when we consider the almost conservation law in the next section.
%
%Acting $I$ operator on both sides of \eqref{bbm}, we obtain
%\begin{align}\label{bbm-I}
%(Iu)_t-(Iu)_{txx} +  (Iu)_x + I(uu_x)=0, \quad Iu(0,x)=Iu_0(x).
%\end{align}
%We shall understand \eqref{bbm-I} as an equation of $Iu$.
%Denote by
%\begin{align}\label{equ-loc-2}
%\varphi(D)=\partial_x(1-\partial_x^2)^{-1}.
%\end{align}
%Acting both sides of \eqref{bbm-I} gives
%\begin{align}\label{bbm-I-1}
%(Iu)_t+ \varphi(D)Iu+  \frac{1}{2}\varphi(D)I(u^2)=0, \quad Iu(0,x)=Iu_0(x).
%\end{align}

Denote by
\begin{align}\label{equ-loc-2}
\varphi(D)=\partial_x(1-\partial_x^2)^{-1}.
\end{align}
Acting both sides of \eqref{bbm} gives
\begin{align}\label{bbm-I-1}
u_t+ \varphi(D)u+  \frac{1}{2}\varphi(D)(u^2)=0, \quad u(0,x)=u_0(x).
\end{align}
The integral version of \eqref{bbm-I-1} reads
\begin{align}\label{bbm-inte}
u(t)=e^{-t\varphi(D)}u_0-\frac{1}{2}\int_0^te^{-(t-\tau)\varphi(D)}\varphi(D)(u^2)\d \tau.
\end{align}
Here $e^{-t\varphi(D)}$ is the $C_0$ semigroup generated by $-\varphi(D)$, which can also be understood as a Fourier multiplier with symbol $e^{-it\xi(1+|\xi|^2)^{-1}}$. Clearly, since the modulus of $e^{-it\xi(1+|\xi|^2)^{-1}}$ is $1$,  we have
\begin{align}\label{bbm-loc-3}
 \|e^{-t\varphi(D)}u\|_{H^s(\R)}=\|u\|_{H^s(\R)}, \quad \forall t\in \R, s\in \R.
\end{align}
Moreover, we need the following bilinear estimate.
\begin{lemma}\label{lem-bi}
For all $\sigma>0$, we have
$$
\|\varphi(D)I(uv)\|_{H^1(\R)}\lesssim \|Iu\|_{H^1(\R)}\|Iv\|_{H^1(\R)}.
$$
\end{lemma}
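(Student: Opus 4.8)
The plan is to work entirely on the Fourier side and reduce the estimate to an elementary convolution inequality, with the whole argument hinging on a single pointwise bound for the symbol $m$. Throughout I would write $U:=m(\xi)|\widehat{u}(\xi)|=|\widehat{Iu}(\xi)|$ and $V:=m(\xi)|\widehat{v}(\xi)|=|\widehat{Iv}(\xi)|$, so that $\|U\|_{L^2}=\|Iu\|_{L^2}$ and $\|U\|_{L^2}\le\|Iu\|_{H^1}$, and likewise for $V$.

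First I would dispose of the operator $\varphi(D)$, whose Fourier symbol is $\tfrac{\i\xi}{1+\xi^2}$. The point is that the $H^1$ weight $(1+|\xi|)$ combines with this symbol to give the uniformly bounded factor $\tfrac{(1+|\xi|)|\xi|}{1+\xi^2}\lesssim 1$ for all $\xi\in\R$. Hence $\varphi(D)$ maps $L^2(\R)$ boundedly into $H^1(\R)$, and it suffices to prove the $L^2$ estimate
$$
\|I(uv)\|_{L^2(\R)}=\big\|m(\xi)\,\widehat{uv}(\xi)\big\|_{L^2(\R)}\lesssim \|Iu\|_{H^1(\R)}\|Iv\|_{H^1(\R)}.
$$

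The key step is the pointwise submultiplicativity $m(\xi)\le 2\,m(\xi-\eta)\,m(\eta)$ for all $\xi,\eta\in\R$. With $m(\xi)=\cosh(\sigma\xi)$ this follows from the addition formula $\cosh(\sigma\xi)=\cosh(\sigma(\xi-\eta))\cosh(\sigma\eta)+\sinh(\sigma(\xi-\eta))\sinh(\sigma\eta)$ combined with the elementary bound $|\sinh|\le\cosh$. Writing $\widehat{uv}=\tfrac{1}{2\pi}\widehat{u}\ast\widehat{v}$ and inserting this inequality inside the convolution integral yields $m(\xi)|\widehat{uv}(\xi)|\lesssim (U\ast V)(\xi)$, hence $\|m\,\widehat{uv}\|_{L^2}\lesssim\|U\ast V\|_{L^2}$. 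Finally I would close with Young's inequality $\|U\ast V\|_{L^2}\le\|U\|_{L^1}\|V\|_{L^2}$: here $\|V\|_{L^2}=\|Iv\|_{L^2}\le\|Iv\|_{H^1}$, while a Cauchy--Schwarz estimate against the square-integrable weight $(1+|\xi|)^{-1}$ gives $\|U\|_{L^1}=\int_\R m|\widehat{u}|\,\d\xi\lesssim \big\|(1+|\xi|)\,m\,\widehat{u}\big\|_{L^2}=\|Iu\|_{H^1}$, since $\int_\R (1+|\xi|)^{-2}\,\d\xi<\infty$. Multiplying the two factors gives the claim.

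The main obstacle is precisely the pointwise bound on $m$, which is where the special structure $m=\cosh(\sigma\xi)$ is used; everything else is the standard Young/Cauchy--Schwarz machinery. The subtle point worth emphasizing is that one cannot hope for an $L^2\times L^2$ convolution estimate, and this is exactly why the full strength of the $H^1$ norms is needed, namely to absorb the extra $(1+|\xi|)^{-1}$ and thereby control the $L^1$ factor.
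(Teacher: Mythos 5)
Your proof is correct, and it shares all the essential ingredients with the paper's: the reduction to an $L^2$ bound on $I(uv)$ via the boundedness of $\varphi(D):L^2\to H^1$, the submultiplicativity $m(\xi)\le 2m(\xi_1)m(\xi_2)$ for $\xi=\xi_1+\xi_2$, and the square-integrability of $(1+|\xi|)^{-1}$ exploited through Cauchy--Schwarz. The one place you diverge is in how the convolution is closed. The paper estimates $\|F\|_{L^2}\lesssim\sup_\xi(1+|\xi|)|F(\xi)|$ using $(1+|\xi|)^{-1}\in L^2$, distributes \emph{both} weights via $1+|\xi|\le(1+|\xi_1|)(1+|\xi_2|)$ and $m(\xi)\le 2m(\xi_1)m(\xi_2)$, and then applies Cauchy--Schwarz at each fixed $\xi$ (in effect the $L^2\ast L^2\to L^\infty$ bound); the result is symmetric, with the full $H^1$ weight landing on each factor. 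You instead keep the output in $L^2$, use Young's inequality $L^1\ast L^2\to L^2$, and spend the weight $(1+|\xi|)$ only on the factor whose $L^1$ norm must be controlled. Your version is marginally sharper in that it actually proves $\|I(uv)\|_{L^2}\lesssim\|Iu\|_{H^1}\|Iv\|_{L^2}$, i.e.\ only one factor needs the derivative, though this extra precision is not needed anywhere in the paper. Your observation that an unweighted $L^2\times L^2$ convolution estimate cannot close the argument is accurate and correctly identifies why the $H^1$ norms (rather than $L^2$ norms) appear on the right-hand side.
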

\begin{proof}
We first note that $\varphi(D)$ is bounded from $L^2(\R)$ to $H^1(\R)$. In fact, by Plancherel theorem we have
$$
\|\varphi(D)u\|_{H^1(\R)}\lesssim  \|(1+|\xi|)i\xi(1+|\xi|^2)^{-1}\widehat{u}\|_{L_\xi^2(\R)}\lesssim \|\widehat{u}\|_{L_\xi^2(\R)}\lesssim \|u\|_{L^2(\R)}.
$$
It follows that
\begin{align}\label{equ-bi-1}
\|\varphi(D)I(uv)\|_{H^1(\R)}\lesssim \|I(uv)\|_{L^2(\R)}.
\end{align}
Since the Fourier transform of $I(u^2)$ is
$$
\int_{\xi=\xi_1+\xi_2}m(\xi)\widehat{u}(\xi_1)\widehat{v}(\xi_2)
$$
and the function $\xi\mapsto (1+|\xi|)^{-1}$ belongs to $L^2(\R)$, we deduce that
\begin{align}\label{equ-bi-2}
 \|I(uv)\|_{L^2(\R)}&\lesssim \|\int_{\xi=\xi_1+\xi_2}m(\xi)\widehat{u}(\xi_1)\widehat{v}(\xi_2)\|_{L_\xi^2(\R)}\nonumber\\
 &\lesssim \sup_{\xi\in \R} \;\int_{\xi=\xi_1+\xi_2}(1+|\xi|)m(\xi)|\widehat{u}(\xi_1)\widehat{v}(\xi_2)|.
\end{align}
By the definition \eqref{equ-law-2},  it is easy to check that
\begin{align}\label{equ-bi-3}
m(\xi)\leq 2m(\xi_1)m(\xi_2), \quad \mbox{ if } \xi=\xi_1+\xi_2.
\end{align}
Moreover, we have
\begin{align}\label{equ-bi-4}
1+|\xi|\leq (1+|\xi_1|)(1+|\xi_2|), \quad \mbox{ if } \xi=\xi_1+\xi_2.
\end{align}
Plugging \eqref{equ-bi-3}-\eqref{equ-bi-4} into \eqref{equ-bi-2} gives that
\begin{align}\label{equ-bi-5}
 \|I(uv)\|_{L^2(\R)} \lesssim  \sup_{\xi\in \R} \; \int_{\xi=\xi_1+\xi_2}(1+|\xi_1|)m(\xi_1)|\widehat{u}(\xi_1)|(1+|\xi_2|)m(\xi_2)|\widehat{v}(\xi_2)|.
\end{align}
Applying Cauchy-Schwarz inequality, we deduce from \eqref{equ-bi-5} that
\begin{align*}
\|I(uv)\|_{L^2(\R)}&\lesssim \|(1+|\xi_1|)m(\xi_1)|\widehat{u}(\xi_1)|\|_{L^2}\|(1+|\xi_2|)m(\xi_2)|\widehat{v}(\xi_2)|\|_{L^2}\\
&\lesssim \|Iu\|_{H^1(\R)}\|Iv\|_{H^1(\R)}.
\end{align*}
This gives the desired bound.
\end{proof}

Now we prove the local well posedness of the BBM equation \eqref{bbm} in analytic space.

\begin{proposition}\label{prop-loc}
Let $\sigma>0$ and $I$ be defined by \eqref{equ-law-1}. Then for every initial data $u_0$ satisfying $Iu_0\in H^1(\R)$, there exists a unique solution of the BBM equation \eqref{bbm} satisfying $Iu\in C([0,\delta];H^1(\R))$ and
\begin{align}\label{bbm-loc-4}
\sup_{t\in [0,\delta]}\|Iu(t)\|_{H^1(\R)}\leq 2\|Iu_0\|_{H^1(\R)},
\end{align}
and the lifespan $\delta$ satisfies
\begin{align}\label{bbm-loc-5}
\delta\sim (\|Iu_0\|_{H^1(\R)})^{-1}.
\end{align}
\end{proposition}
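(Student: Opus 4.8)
The plan is to set up a standard fixed-point (contraction mapping) argument for the integral equation \eqref{bbm-inte} in the complete metric space
$$
X_\delta=\left\{u: Iu\in C([0,\delta];H^1(\R)),\ \sup_{t\in[0,\delta]}\|Iu(t)\|_{H^1(\R)}\leq 2\|Iu_0\|_{H^1(\R)}\right\},
$$
equipped with the metric $d(u,v)=\sup_{t\in[0,\delta]}\|I(u-v)(t)\|_{H^1(\R)}$. Since $I$ commutes with the Fourier multipliers $e^{-t\varphi(D)}$ and $\varphi(D)$, the map
$$
\Phi(u)(t)=e^{-t\varphi(D)}u_0-\frac{1}{2}\int_0^t e^{-(t-\tau)\varphi(D)}\varphi(D)(u^2)\d\tau
$$
can be analyzed entirely at the level of $\|I\,\cdot\,\|_{H^1}$, which is where the two available tools apply: the $H^s$-isometry \eqref{bbm-loc-3} of the semigroup and the bilinear estimate of Lemma \ref{lem-bi}.

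The key steps, in order, are as follows. First I would show $\Phi$ maps $X_\delta$ into itself. Applying $I$ to $\Phi(u)$ and using \eqref{bbm-loc-3} on the linear term gives $\|Ie^{-t\varphi(D)}u_0\|_{H^1}=\|Iu_0\|_{H^1}$. For the Duhamel term I would move $I$ inside, use \eqref{bbm-loc-3} again to discard the semigroup factor, and then invoke Lemma \ref{lem-bi} to estimate $\|\varphi(D)I(u^2)\|_{H^1}\lesssim\|Iu\|_{H^1}^2$, so that
\begin{align*}
\sup_{t\in[0,\delta]}\|I\Phi(u)(t)\|_{H^1}\leq \|Iu_0\|_{H^1}+C\delta\sup_{t\in[0,\delta]}\|Iu(t)\|_{H^1}^2\leq \|Iu_0\|_{H^1}+4C\delta\|Iu_0\|_{H^1}^2.
\end{align*}
Choosing $\delta$ so that $4C\delta\|Iu_0\|_{H^1}\leq 1$, i.e. $\delta\sim\|Iu_0\|_{H^1}^{-1}$ as in \eqref{bbm-loc-5}, keeps the right-hand side bounded by $2\|Iu_0\|_{H^1}$, establishing the self-mapping property together with the bound \eqref{bbm-loc-4}. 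Next I would prove the contraction estimate: writing $u^2-v^2=(u-v)(u+v)$ and applying Lemma \ref{lem-bi} to the product gives $d(\Phi(u),\Phi(v))\leq C\delta(\|Iu\|_{H^1}+\|Iv\|_{H^1})\,d(u,v)\leq 4C\delta\|Iu_0\|_{H^1}\,d(u,v)$, so the same smallness condition on $\delta$ makes $\Phi$ a contraction with factor $\leq\tfrac12$.

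Once self-mapping and contraction are in hand, the Banach fixed-point theorem yields a unique $u\in X_\delta$ solving \eqref{bbm-inte}, which gives existence, uniqueness, the a priori bound \eqref{bbm-loc-4}, and the lifespan \eqref{bbm-loc-5}; continuity in time, $Iu\in C([0,\delta];H^1)$, follows from the integral formula and dominated convergence since the integrand is controlled in $H^1$. The main obstacle is not any single estimate but ensuring that every operation is performed at the level of the weighted norm $\|I\,\cdot\,\|_{H^1}$ rather than the plain $H^1$ norm; the crucial point is that Lemma \ref{lem-bi} delivers precisely the bilinear bound in this analytic-weighted norm, encoding the submultiplicativity $m(\xi)\leq 2m(\xi_1)m(\xi_2)$ from \eqref{equ-bi-3}, so the nonlinearity does not see the exponential weight as an obstruction. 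A minor point worth checking is reality of the solution and the equivalence, via \eqref{equ-loc-1}, between the condition $Iu_0\in H^1$ and $u_0\in G^\sigma$ with one Sobolev derivative, so that the statement genuinely lives in the analytic space.
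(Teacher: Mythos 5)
Your proposal is correct and follows essentially the same route as the paper: a contraction mapping argument for the integral equation \eqref{bbm-inte} on the ball $\{\sup_t\|Iu(t)\|_{H^1}\leq 2\|Iu_0\|_{H^1}\}$, using the semigroup isometry \eqref{bbm-loc-3} and the bilinear estimate of Lemma \ref{lem-bi}, with $\delta\sim\|Iu_0\|_{H^1}^{-1}$. The only cosmetic difference is in obtaining time continuity (the paper reads it off from the equation, which gives $Iu_t\in L^\infty((0,\delta);H^1)$, rather than from the integral formula), which does not affect the argument.
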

\begin{proof}
It suffices to prove the well posedness for the integral equation \eqref{bbm-inte}. We shall use the contraction mapping principle.  Consider the mapping
\begin{align}\label{bbm-loc-6}
\Gamma u = e^{-t\varphi(D)}u_0-\frac{1}{2}\int_0^te^{-(t-\tau)\varphi(D)}\varphi(D)(u^2)\d \tau
\end{align}
on the ball
\begin{align}\label{bbm-loc-7}
\mathcal {B}=\Big\{u:  \sup_{t\in[0,\delta]}\|Iu(t)\|_{H^1(\R)}\leq 2\|Iu_0\|_{H^1(\R)}  \Big\}.
\end{align}
On one hand, if $u\in \mathcal {B}$, then by \eqref{bbm-loc-3} and Lemma \ref{lem-bi}
\begin{align}\label{bbm-loc-8}
 \sup_{t\in [0,\delta]}\|I\Gamma u\|_{H^1(\R)}&\leq \|e^{-t\varphi(D)}Iu_0\|_{H^1(\R)}+\frac{1}{2}\int_0^\delta \|e^{-(t-\tau)\varphi(D)}\varphi(D)I(u^2)\|_{H^1(\R)}\d \tau\nonumber\\
 & \leq \|Iu_0\|_{H^1(\R)}+c\delta \sup_{t\in[0,\delta]} \|Iu(t)\|^2_{H^1(\R)}\nonumber\\
 &\leq \|Iu_0\|_{H^1(\R)}+4c\delta\|Iu_0\|^2_{H^1(\R)}.
\end{align}
Moreover, if $u,v\in \mathcal {B}$, then
\begin{align}\label{bbm-loc-9}
 \sup_{t\in [0,\delta]}\|I\Gamma u-I\Gamma v\|_{H^1(\R)}&\leq  \frac{1}{2}\int_0^\delta \|e^{-(t-\tau)\varphi(D)}\varphi(D)I(u^2-v^2)\|_{H^1(\R)}\d \tau\nonumber\\
 & \leq  c\delta \sup_{t\in[0,\delta]} \|Iu(t)+Iv(t)\|_{H^1(\R)}\|Iu(t)-Iv(t)\|_{H^1(\R)}\nonumber\\
 &\leq  4c\delta\|Iu_0\|_{H^1(\R)}\sup_{t\in[0,\delta]} \|Iu(t)-Iv(t)\|_{H^1(\R)}.
\end{align}
Thanks to \eqref{bbm-loc-8}-\eqref{bbm-loc-9}, if we choose
$$
\delta = \frac{1}{8c\|Iu_0\|_{H^1(\R)}},
$$
then $\Gamma:\mathcal {B}\mapsto \mathcal {B}$ is a contraction mapping. This proves the uniqueness and existence of solution satisfying $Iu\in L^\infty((0,\delta);H^1(\R))$. Applying $I$ on both sides of \eqref{bbm-I-1} we get
\begin{align}\label{bbm-loc-10}
(Iu)_t+ \varphi(D)(Iu)+  \frac{1}{2}\varphi(D)I(u^2)=0.
\end{align}
Using Lemma \ref{lem-bi} again, it follows from \eqref{bbm-loc-10} that $Iu_t\in L^\infty((0,\delta);H^1(\R))$. By Sobolev embedding, we infer that $Iu\in C([0,\delta)];H^1(\R))$. This completes the proof.
\end{proof}

Now we are going to study the spatial analytic radius for large time. To this end, we shall write from now on
$$
I_\sigma u = \mathcal {F}^{-1}\Big( m(\xi)\widehat{u}(\xi) \Big), \quad  m(\xi)= \frac{1}{2}(e^{\sigma \xi}+e^{-\sigma \xi}), \quad \xi \in \R
$$
to emphasized the role of $\sigma>0$.

\begin{proposition}[Based on almost conservation law in Proposition \ref{prop-law}]\label{prop-global}
Assume that $\sigma_0>0$ and $I_{\sigma_0}u_0\in H^1(\R)$. Then when $u$ solves \eqref{bbm},
$$
I_{\sigma(t)}u(t)\in H^1(\R) \mbox{ with } \sigma(t)\geq ct^{-\frac{2}{3}}, \qquad t\to \infty,
$$
where $c>0$ is a constant depending only on $\sigma_0$ and $\|I_{\sigma_0}u_0\|_{H^1(\R)}$.
\end{proposition}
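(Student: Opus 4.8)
The plan is to run the iteration scheme that is standard for this kind of almost-conservation-law argument, combining the local theory (Proposition~\ref{prop-loc}) with the higher-order almost conservation law (Proposition~\ref{prop-law}), and then optimizing the relation between the time-step count and the shrinking radius $\sigma$. I would fix $\sigma>0$ small (to be chosen in terms of the target time $T$) and set $A_0=\|I_\sigma u_0\|_{H^1}$. On $[0,\delta]$ the local theory gives a solution with $\|I_\sigma u(t)\|_{H^1}\le 2A_0$ and lifespan $\delta\sim A_0^{-1}$, and the almost conservation law upgrades this to the controlled growth
\begin{align}\label{plan-step}
\|I_\sigma u(\delta)\|_{H^1}^2\le \|I_\sigma u_0\|_{H^1}^2+C\sigma^{3/2}\|I_\sigma u_0\|_{H^1}^2.
\end{align}
The key point is that, since the almost conservation law \eqref{plan-step} allows the squared norm to be controlled by the \emph{initial} squared norm at the start of each step, I can iterate the local theory and \eqref{plan-step} on the successive intervals $[0,\delta],[\delta,2\delta],\dots$, keeping the lifespan $\delta$ fixed so long as the norm stays comparable to $A_0$.

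The heart of the argument is the bookkeeping. After $n$ steps, iterating \eqref{plan-step} gives $\|I_\sigma u(n\delta)\|_{H^1}^2\le (1+C\sigma^{3/2})^n A_0^2$, so as long as $n$ is not too large the norm at most doubles; concretely the doubling threshold is reached when $n\sim \sigma^{-3/2}$. Since each step advances time by $\delta\sim A_0^{-1}$ and the norm is held comparable to $A_0$ throughout (so $\delta$ may be taken uniform across the steps), after $n$ steps I reach time $T\sim n\delta\sim \sigma^{-3/2}A_0^{-1}$. Inverting this relation gives $\sigma\sim (A_0^{-1}T)^{-2/3}\sim T^{-2/3}$ for large $T$, which is exactly the claimed rate. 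Throughout I would verify that the fixed radius $\sigma$ (not decreasing within the iteration) is legitimate: the gain $\sigma^{3/2}$ in \eqref{plan-step} is precisely what makes the number of allowed steps scale like $\sigma^{-3/2}$ rather than $\sigma^{-1}$, and this extra $\sigma^{-1/2}$ worth of steps is the source of the improvement of the exponent from $-1$ to $-2/3$.

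To convert this into the statement phrased in terms of a given large time $t$, I would argue contrapositively: given $t$ large, choose $\sigma=\sigma(t)\sim c\,t^{-2/3}$ with the constant $c$ depending only on $\sigma_0$ and $A_0=\|I_{\sigma_0}u_0\|_{H^1}$ (note $\|I_\sigma u_0\|_{H^1}\le \|I_{\sigma_0}u_0\|_{H^1}$ for $\sigma\le\sigma_0$, so the initial data bound is uniform and controlled by the $G^{\sigma_0}$ norm). Then the iteration runs for the full number of steps needed to reach time $t$ while keeping $\|I_{\sigma(t)}u(t)\|_{H^1}\le 2A_0$, which shows $I_{\sigma(t)}u(t)\in H^1(\R)$ with the stated lower bound on $\sigma(t)$. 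Finally, the continuity-in-time statement $u\in C([0,\infty);G^{\sigma(t)})$ follows from the local continuity $Iu\in C([0,\delta];H^1)$ established in Proposition~\ref{prop-loc}, glued across the intervals together with the norm equivalence \eqref{equ-loc-1}.

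The main obstacle I anticipate is making the iteration rigorous while the radius is held fixed: one must check that the hypothesis of the local theory ($I_\sigma u$ bounded in $H^1$) and the hypothesis of the almost conservation law are satisfied at the start of \emph{every} interval, that the lifespan $\delta$ genuinely stays bounded below uniformly (which requires the norm never to more than double before the target time), and that summing the per-step error $C\sigma^{3/2}A_0^2$ over $\sim\sigma^{-3/2}$ steps yields a total of $O(A_0^2)$ rather than something larger. This last estimate—showing $(1+C\sigma^{3/2})^{n}\le 2$ for $n\lesssim \sigma^{-3/2}$—is the quantitatively delicate point, and getting the constant $c$ to depend only on $\sigma_0$ and $\|u_0\|_{G^{\sigma_0}}$ requires tracking these dependencies carefully through the induction.
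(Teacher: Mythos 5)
Your proposal is correct and follows essentially the same route as the paper: fix $\sigma$ in terms of the target time $T$, iterate the local theory of Proposition~\ref{prop-loc} together with the almost conservation law of Proposition~\ref{prop-law} over $n\sim T/\delta$ intervals of uniform length $\delta\sim A_0^{-1}$, and observe that the cumulative growth stays bounded precisely when $n\lesssim \sigma^{-3/2}$, forcing $\sigma\gtrsim T^{-2/3}$ (the paper runs this as an additive induction, $\|I_\sigma u(k\delta)\|^2\le A_0^2+kC\delta\sigma^{3/2}(2A_0)^3$, which is equivalent to your multiplicative bookkeeping $(1+C\sigma^{3/2})^n\le 2$). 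The only quibble is the algebra slip $\sigma\sim (A_0^{-1}T)^{-2/3}$, which should read $\sigma\sim (A_0T)^{-2/3}$; this does not affect the conclusion since $c$ is allowed to depend on $\|I_{\sigma_0}u_0\|_{H^1}$.
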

\begin{proof}
Let  $\sigma_0>0$ and $I_{\sigma_0}u_0\in H^1(\R)$. According to Proposition \ref{prop-loc}, the BBM equation has a unique solution such that $I_{\sigma_0}u\in C([0,\delta];H^1(\R))$ and
\begin{align}\label{equ-gl-1}
\sup_{t\in[0,\delta]}\|I_{\sigma_0}u(t)\|_{H^1}\leq 2\|I_{\sigma_0}u_0\|_{H^1},
\end{align}
with
\begin{align}\label{equ-gl-2}
\delta = \frac{1}{8C_1\|I_{\sigma_0}u_0\|_{H^1}}.
\end{align}
Also, thanks to the almost conservation law in Proposition \ref{prop-law},
\begin{align}\label{equ-gl-3}
 \sup_{t\in[0,\delta]}\|I_{\sigma_0}u(t)\|^2_{H^1}\leq \|I_{\sigma_0}u_0\|^2_{H^1}+C_2\delta \sigma^{\frac{3}{2}}\|I_{\sigma_0}u_0\|^3_{H^1}.
\end{align}

Fix $T\gg 1$. We shall use Proposition \ref{prop-loc} and Proposition \ref{prop-law} repeatedly to extend the estimates \eqref{equ-gl-1} and \eqref{equ-gl-3} to the interval $[0,T]$. With $\delta$ given by \eqref{equ-gl-2}, there exists a unique integer $n$ such that $T\in[n\delta,(n+1)\delta)$. We claim that if $k\in \{1,2,\cdots,n\}$ and
\begin{align}\label{equ-gl-4}
 \sigma = \min\left\{\sigma_0,  \Big( \frac{2C_1}{C_2(n+1)} \Big)^{\frac{2}{3}}\right\},
\end{align}
then
\begin{align}\label{equ-gl-4}
\sup_{t\in[0,k\delta]}\|I_{\sigma }u(t)\|_{H^1}\leq 2\|I_{\sigma_0}u_0\|_{H^1},
\end{align}
and
\begin{align}\label{equ-gl-5}
\sup_{t\in[0,k\delta]}\|I_{\sigma }u(t)\|^2_{H^1}\leq \|I_{\sigma_0}u_0\|^2_{H^1}+k C_2\delta 2^3\sigma^{\frac{3}{2}}\|I_{\sigma_0}u_0\|^3_{H^1}.
\end{align}
We shall argue by the induction. Clearly, the inequalities \eqref{equ-gl-4} and \eqref{equ-gl-5} in case $k=1$ follows from \eqref{equ-gl-1} and \eqref{equ-gl-3}, respectively. Now we assume that \eqref{equ-gl-4} and \eqref{equ-gl-5} hold for some $k\in \{1,2,\cdots,n\}$, then we show that they also hold for $k+1$. In fact, applying  Proposition \ref{prop-loc} on time interval $[k\delta,(k+1)\delta]$,  we have
\begin{align}\label{equ-gl-6}
\sup_{t\in[k\delta,(k+1)\delta]}\|I_{\sigma}u(t)\|_{H^1}&\leq \|I_{\sigma}u(k\delta)\|^2_{H^1}+C_2\delta \sigma^{\frac{3}{2}}\|I_{\sigma}u(k\delta)\|^3_{H^1}\nonumber\\
\mbox{ by \eqref{equ-gl-4} }&\leq  \|I_{\sigma}u(k\delta)\|^2_{H^1}+C_2\delta \sigma^{\frac{3}{2}}(2\|I_{\sigma_0}u_0\|_{H^1})^3\nonumber\\
\mbox{ by \eqref{equ-gl-5} }&\leq  \|I_{\sigma_0}u_0\|^2_{H^1}+k C_2\delta 2^3\sigma^{\frac{3}{2}}\|I_{\sigma_0}u_0\|^3_{H^1}+C_2\delta \sigma^{\frac{3}{2}}(2\|I_{\sigma_0}u_0\|_{H^1})^3\nonumber\\
&= \|I_{\sigma_0}u_0\|^2_{H^1}+ (k+1) C_2\delta 2^3\sigma^{\frac{3}{2}}\|I_{\sigma_0}u_0\|^3_{H^1}.
\end{align}
This proves \eqref{equ-gl-5} for $k+1$. Moreover, by \eqref{equ-gl-4} and \eqref{equ-gl-6}, we have
$$
\sup_{t\in[0,(k+1)\delta]}\|I_{\sigma }u(t)\|_{H^1}\leq 2\|I_{\sigma_0}u_0\|_{H^1},
$$
which verifies \eqref{equ-gl-4} for $k+1$, provided that
\begin{align}\label{equ-gl-7}
 (k+1) C_2\delta 2^3\sigma^{\frac{3}{2}}\|I_{\sigma_0}u_0\|^3_{H^1}\leq \|I_{\sigma_0}u_0\|^2_{H^1}.
\end{align}
Recall that \eqref{equ-gl-2}, the inequality \eqref{equ-gl-7} is equivalent to
$$
\sigma\leq \Big( \frac{2C_1}{C_2(k+1)} \Big)^{\frac{2}{3}}.
$$
But this follows from our choice \eqref{equ-gl-4}.

So we conclude that $I_\sigma u(T)\in H^1(\R)$ with
$$
\sigma =\Big( \frac{2C_1}{C_2(n+1)} \Big)^{\frac{2}{3}}\geq \Big( \frac{2C_1}{C_2(\frac{T}{\delta}+1)} \Big)^{\frac{2}{3}}
$$
since $n\geq T/\delta$. Thus
$$
\sigma \geq cT^{-\frac{2}{3}}, \quad \mbox{ for $T$ large.}
$$
This completes the proof.
\end{proof}

\begin{proof}[\bf Proof of Theorem \ref{thm-1}]
Assume that $u_0\in G^{\sigma_0}$ for some $\sigma_0>0$. Then we have $I_{\frac{\sigma_0}{2}}u_0\in H^1(\R)$. In fact,
$$
\|I_{\frac{\sigma_0}{2}}u_0\|_{H^1(\R)}= \|(1+|\xi|)2^{-1}(e^{\frac{\sigma_0}{2}\xi}+e^{-\frac{\sigma_0}{2}\xi})\widehat{u_0}(\xi)\|_{L^2(\R)}\lesssim \|e^{\sigma_0|\xi|}\widehat{u_0}(\xi)\|_{L^2(\R)}\lesssim \|u_0\|_{G^{\sigma_0}}.
$$
Now according to Proposition \ref{prop-global}, we know $I_{\sigma(t)}u(t)\in H^1(\R)$ for all $t>0$ with
\begin{align}\label{equ-global-1}
\sigma(t)\geq ct^{-\frac{2}{3}}, \quad t\to \infty,
\end{align}
where $c>0$ depends only on $\sigma_0$ and $\|u_0\|_{G_{\sigma_0}}$. Recall that the norm $\|I_\sigma u\|_{L^2(\R)}$ is equivalent to $\|u\|_{G^\sigma}$, see \eqref{equ-loc-1}, we know
$$
u\in L^\infty((0,\infty);G^{\sigma(t)})
$$
with $\sigma(t)$ satisfying \eqref{equ-global-1}. Moreover, similar to Lemma \ref{lem-bi}, we can show that
$$
\|\partial_x(1-\partial_x^2)^{-1}(uv)\|_{G^\sigma}\lesssim \|u\|_{G^\sigma}\|v\|_{G^\sigma}.
$$
With this bilinear estimate in hand, proceeding as Proposition \ref{prop-loc}, one can show that, for every $t_0\geq 0$, the solution of BBM equation satisfies
$$
u\in C([t_0,t_0+\delta];G^{\sigma(t_0)})
$$
for some $\delta>0$. Thus, if we assume that $\sigma(t)$ is a non-increasing function satisfying \eqref{equ-global-1}, then $u\in C([0,\infty);G^{\sigma(t)})$. This completes the proof of Theorem \ref{thm-1}.
\end{proof}

\section{Almost conservation law}

In this section, we shall prove an almost conservation law of the BBM equation \ref{bbm} in analytic space $G^\sigma$. This plays a key role in the proof of Theorem \ref{thm-1}, see Section 2. As mentioned in the introduction, the main idea is to reduce the cancelation property of the nonlinear term in analytic space to that of higher order polynomials. This strategy has been used in our previous work \cite{wangm19} for the KdV equation.

Using the Taylor expansion
$$
e^{\sigma \xi} = \sum_{k=0}^\infty \frac{(\sigma \xi)^k}{k!}, \quad \xi \in \R,
$$
we deduce from the definition \eqref{equ-law-2} that
\begin{align}\label{equ-law-4}
m^2(\xi)=\frac{1}{4}(e^{2\sigma \xi}+e^{-2\sigma \xi}+2) =1+ \frac{1}{2} \sum_{k=1}^\infty \frac{(2\sigma \xi)^{2k}}{(2k)!}, \quad \xi \in \R.
\end{align}
Multiplying \eqref{bbm} with $2I^2u$ and integrating over $\R$, we obtain
\begin{align}\label{equ-law-5}
 \frac{\d}{\d t}\int_\R \Big( (Iu)^2+(\partial_xIu)^2 \Big)\d x=-2(uu_x,I^2u),
\end{align}
where $(\cdot,\cdot)$ denotes the inner product in $L^2(\R)$, and we used the fact
$$
(u_x,I^2u)=\int_\R \frac{1}{2}\partial_x(Iu)^2\d x=0.
$$
So the key point is to estimate $2(uu_x,I^2u)$. Using integration by parts and Paserval formula, we have
\begin{align}\label{equ-law-6}
 -2(uu_x,I^2u)&=(\partial_xI^2u,u^2)\nonumber\\
 &=\int_\R i\xi m^2(\xi)\widehat{u}(\xi) \widehat{u^2}(-\xi)\d \xi\nonumber\\
 &=\int_{\xi_1+\xi_2+\xi_3=0} i\xi_1 m^2(\xi_1)\widehat{u}(\xi_1) \widehat{u}(\xi_2)\widehat{u}(\xi_3),
\end{align}
where the integral can be understood as follows
$$
\int_{\xi_1+\xi_2+\xi_3=0}=\int_{\xi_1+\xi_2+\xi_3=0}\d\xi_1\d \xi_2=\int_{\xi_1+\xi_2+\xi_3=0}\d\xi_1\d \xi_3=\int_{\xi_1+\xi_2+\xi_3=0}\d\xi_2\d \xi_3.
$$
By the symmetry property of $\xi_1,\xi_2,\xi_3$, we can replace $\xi_1 m^2(\xi_1)$ by $\xi_2 m^2(\xi_2)$ or $\xi_3 m^2(\xi_3)$ in the integral \eqref{equ-law-6}, so we have
\begin{align}\label{equ-law-7}
 -2(uu_x,I^2u) = \frac{i}{3}\int_{\xi_1+\xi_2+\xi_3=0} \Big(\xi_1 m^2(\xi_1)+\xi_2 m^2(\xi_2)+\xi_3 m^2(\xi_3)\Big)\widehat{u}(\xi_1) \widehat{u}(\xi_2)\widehat{u}(\xi_3).
\end{align}
In particular, if $m(\xi)=1$, then $I$ is the identity operator, and \eqref{equ-law-7} shows that
\begin{align}\label{equ-law-8}
 2(uu_x,u) = \frac{i}{3}\int_{\xi_1+\xi_2+\xi_3=0} \Big(\xi_1 + \xi_2  +\xi_3  \Big)\widehat{u}(\xi_1) \widehat{u}(\xi_2)\widehat{u}(\xi_3)=0,
\end{align}
which gives a Fourier analysis proof of the fact $\int_\R u^2u_x\d x=0$.

Plugging \eqref{equ-law-4} into \eqref{equ-law-7} gives
\begin{align}\label{equ-law-9}
- 2(uu_x,I^2u) &= \frac{i}{3}\int_{\xi_1+\xi_2+\xi_3=0}\sum_{j=1}^3\xi_j(1+\frac{1}{2}\sum_{k=1}^\infty \frac{(2\sigma \xi_j)^{2k}}{(2k)!})  \widehat{u}(\xi_1) \widehat{u}(\xi_2)\widehat{u}(\xi_3)\nonumber\\
&=\frac{i}{6}\int_{\xi_1+\xi_2+\xi_3=0} \sum_{k=1}^\infty \frac{(2\sigma)^{2k}}{(2k)!} (\xi_1^{2k+1}+\xi_2^{2k+1}+\xi_3^{2k+1}) \widehat{u}(\xi_1) \widehat{u}(\xi_2)\widehat{u}(\xi_3).
\end{align}
Here in the last step we have used  \eqref{equ-law-8}.
%In view of \eqref{equ-law-8}, the sum in \eqref{equ-law-9} in only taken for all $k\geq 1$, thus
%\begin{align}\label{equ-law-10}
% -2(uu_x,I^2u) = \frac{i}{3}\int_{\xi_1+\xi_2+\xi_3=0} \sum_{k=1}^\infty \frac{\sigma^{2k}}{(2k)!} (\xi_1^{2k+1}+\xi_2^{2k+1}+\xi_3^{2k+1}) \widehat{u}(\xi_1) \widehat{u}(\xi_2)\widehat{u}(\xi_3).
%\end{align}

To proceed, for every $k\geq 1$, we need to analyse the polynomial $\xi_1^{2k+1}+\xi_2^{2k+1}+\xi_3^{2k+1}$ on the hyperplane $\xi_1+\xi_2+\xi_3=0$.
\begin{lemma}\label{lem-factor}
If $\xi_1+\xi_2+\xi_3=0$, then
$$
\xi_1^{2k+1}+\xi_2^{2k+1}+\xi_3^{2k+1}=\xi_1\xi_2\xi_3\sum_{i+j=2k-2} \Big( \xi_1^i(-\xi_2)^j+\xi_1^i(-\xi_3)^j+\xi_2^i(-\xi_3)^j \Big).
$$
\end{lemma}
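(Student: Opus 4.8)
The plan is to treat this as a purely algebraic identity and reduce it to the elementary factorization of a sum of two odd powers. Recall that for any odd exponent $m$ and any $x,z$ one has the geometric-series factorization
$$
x^m+z^m=(x+z)\sum_{i+j=m-1}x^i(-z)^j .
$$
I would apply this with $m=2k-1$ to each of the three pairs occurring in the claimed sum. Writing $S_1=\sum_{i+j=2k-2}\xi_1^i(-\xi_2)^j$, and likewise $S_2,S_3$ for the pairs $(\xi_1,\xi_3)$ and $(\xi_2,\xi_3)$, the factorization yields $(\xi_1+\xi_2)S_1=\xi_1^{2k-1}+\xi_2^{2k-1}$ together with the two analogous relations. (Note this already covers $k=1$, where $2k-2=0$ and each $S_\ell$ is the single term $1$.)

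The key point is to feed in the constraint $\xi_1+\xi_2+\xi_3=0$ in the form $\xi_1+\xi_2=-\xi_3$, $\xi_1+\xi_3=-\xi_2$, $\xi_2+\xi_3=-\xi_1$. This converts the three relations into $\xi_3 S_1=-(\xi_1^{2k-1}+\xi_2^{2k-1})$, $\xi_2 S_2=-(\xi_1^{2k-1}+\xi_3^{2k-1})$, and $\xi_1 S_3=-(\xi_2^{2k-1}+\xi_3^{2k-1})$. I would then expand the right-hand side $\xi_1\xi_2\xi_3(S_1+S_2+S_3)$ by distributing the factor $\xi_1\xi_2\xi_3$ so that each $S_\ell$ absorbs precisely the variable appearing in its own relation, for instance $\xi_1\xi_2\xi_3 S_1=\xi_1\xi_2(\xi_3 S_1)$. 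Each of the three summands then collapses, producing
$$
-\xi_1\xi_2(\xi_1^{2k-1}+\xi_2^{2k-1})-\xi_1\xi_3(\xi_1^{2k-1}+\xi_3^{2k-1})-\xi_2\xi_3(\xi_2^{2k-1}+\xi_3^{2k-1}).
$$

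It then remains only to regroup by collecting the coefficient of each $\xi_\ell^{2k}$, which gives $-\xi_1^{2k}(\xi_2+\xi_3)-\xi_2^{2k}(\xi_1+\xi_3)-\xi_3^{2k}(\xi_1+\xi_2)$, and a final use of the constraint ($\xi_2+\xi_3=-\xi_1$, and so on) turns this into $\xi_1^{2k+1}+\xi_2^{2k+1}+\xi_3^{2k+1}$, the left-hand side. There is no genuine analytic difficulty here; the only step demanding care is the sign bookkeeping and the deliberate pairing of each $S_\ell$ with the variable whose sum-relation it satisfies. This pairing is exactly what lets me multiply through by $\xi_1\xi_2\xi_3$ rather than divide by $\xi_1+\xi_2$ and so keeps the identity valid even when some of the $\xi_\ell$ vanish. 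An alternative route would be induction on $k$ via Newton's identities specialized to $e_1=0$, but the direct factorization above is shorter and makes the role of the constraint transparent.
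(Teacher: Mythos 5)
Your proof is correct, and it takes a genuinely different route from the paper's. The paper works forward from the left-hand side, peeling off the factors $\xi_1$, $\xi_2$, $\xi_3$ one at a time in three successive steps; each step requires splitting a partial geometric sum into the terms that do and do not contain the next factor, which is where most of the bookkeeping in the paper's argument lives. You instead start from the claimed right-hand side and exploit its symmetry: each of the three sums $S_\ell$ is a telescoping factor for exactly one of the relations $\xi_1+\xi_2=-\xi_3$, $\xi_1+\xi_3=-\xi_2$, $\xi_2+\xi_3=-\xi_1$, so multiplying $S_\ell$ by the one variable it pairs with collapses it to a sum of two odd powers of degree $2k-1$, and a final regrouping by the powers $\xi_\ell^{2k}$ recovers the left-hand side after one more use of the constraint. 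I verified the computation: the six monomials $-\xi_1^{2k}\xi_2-\xi_1\xi_2^{2k}-\xi_1^{2k}\xi_3-\xi_1\xi_3^{2k}-\xi_2^{2k}\xi_3-\xi_2\xi_3^{2k}$ regroup to $\xi_1^{2k+1}+\xi_2^{2k+1}+\xi_3^{2k+1}$ exactly as you say, and the $k=1$ base case checks out. Your version is shorter and avoids the sum-splitting entirely; its only cost is that it verifies the identity rather than discovering it, whereas the paper's sequential factorization shows how one would arrive at the particular form of the right-hand side in the first place. Your observation that multiplying through (rather than dividing by $\xi_1+\xi_2$, etc.) keeps the argument valid when some $\xi_\ell$ vanish is a worthwhile point of rigor that the paper does not need to address but that your route does, and you handle it correctly.
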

\begin{remark}
In the case $k=1$, Lemma \ref{lem-factor} recovers the well known identity
$$
\xi^3_1+\xi_2^3+\xi_3^3=3\xi_1\xi_2\xi_3, \quad \mbox{ if } \xi_1+\xi_2+\xi_3=0,
$$
which has been used in the well posedness of the KdV equation, see e.g. \cite{Tao}. In the case $k=2$, it is interesting to note that Lemma \ref{lem-factor} gives
$$
\xi^5_1+\xi_2^5+\xi_3^5=-5\xi_1\xi_2\xi_3(\xi_1\xi_2+\xi_1\xi_3+\xi_2\xi_3), \quad \mbox{ if } \xi_1+\xi_2+\xi_3=0.
$$
\end{remark}

\begin{proof}[Proof of Lemma \ref{lem-factor}]
We divide the proof into three steps.

{\bf Step 1. Factor $\xi_1$.} Since $\xi_1+\xi_2+\xi_3=0$, we have $\xi_2+\xi_3=-\xi_1$, thus
$$
\xi_2^{2k+1}+\xi_3^{2k+1}=(\xi_2+\xi_3)\sum_{i+j=2k} \xi_2^i(-\xi_3)^j=-\xi_1\sum_{i+j=2k} \xi_2^i(-\xi_3)^j.
$$
It follows that
\begin{align}\label{equ-fa-1}
\xi_1^{2k+1}+\xi_2^{2k+1}+\xi_3^{2k+1}&=\xi_1^{2k+1}-\xi_1\sum_{i+j=2k} \xi_2^i(-\xi_3)^j\nonumber\\
&=\xi_1\Big(\xi_1^{2k}-\sum_{i+j=2k} \xi_2^i(-\xi_3)^j\Big).
\end{align}

{\bf Step 2. Factor $\xi_2$.} Next we separate a factor $\xi_2$ out of $\xi_1^{2k}-\sum_{i+j=2k} \xi_2^i(-\xi_3)^j$. To this end, we split the sum $\sum_{i+j=2k} \xi_2^i(-\xi_3)^j$ as two parts, one has a factor $\xi_2$, and the other has no factor $\xi_2$, namely
$$
\sum_{i+j=2k} \xi_2^i(-\xi_3)^j=\xi_3^{2k}+\sum_{i+j=2k, i>0} \xi_2^i(-\xi_3)^j.
$$
It follows that
\begin{align}\label{equ-fa-2}
\xi_1^{2k}-\sum_{i+j=2k} \xi_2^i(-\xi_3)^j = \xi_1^{2k}-\xi_3^{2k}-\sum_{i+j=2k, i>0} \xi_2^i(-\xi_3)^j.
\end{align}
On the one hand, since $\xi_1+\xi_3=-\xi_2$, we find
\begin{align}\label{equ-fa-3}
 \xi_1^{2k}-\xi_3^{2k}&=(\xi_1+\xi_3)(\xi_1-\xi_3)\sum_{i+j=k-1}\xi_1^{2i}\xi_3^{2j}\nonumber\\
 &=-\xi_2(\xi_1-\xi_3)\sum_{i+j=k-1}\xi_1^{2i}\xi_3^{2j}.
\end{align}
On the other hand,  we have
\begin{align}\label{equ-fa-4}
-\sum_{i+j=2k, i>0} \xi_2^i(-\xi_3)^j&=-\Big(\xi_2^{2k}-\xi_2^{2k-1}\xi_3+\cdots+\xi_2^2\xi_3^{2k-2}-\xi_2\xi_3^{2k-1}\Big)\nonumber\\
&=-\xi_2\Big(\xi_2^{2k-1}-\xi_2^{2k-2}\xi_3+\cdots+\xi_2 \xi_3^{2k-2}- \xi_3^{2k-1}\Big)\nonumber\\
&=-\xi_2\sum_{i+j=2k-1}\xi_2^i(-\xi_3)^j.
\end{align}
Plugging \eqref{equ-fa-2}-\eqref{equ-fa-4} into \eqref{equ-fa-1} gives
\begin{align}\label{equ-fa-5}
\xi_1^{2k+1}+\xi_2^{2k+1}+\xi_3^{2k+1}=-\xi_1\xi_2\Big((\xi_1-\xi_3)\sum_{i+j=k-1}\xi_1^{2i}\xi_3^{2j}+\sum_{i+j=2k-1}\xi_2^i(-\xi_3)^j\Big).
\end{align}

{\bf Step 3. Factor $\xi_3$.} Denote
\begin{align}\label{equ-fa-6}
 \Theta = (\xi_1-\xi_3)\sum_{i+j=k-1}\xi_1^{2i}\xi_3^{2j}+\sum_{i+j=2k-1}\xi_2^i(-\xi_3)^j.
\end{align}
In a similar spirit as step 2, but focus on another factor $\xi_3$, we split $\Theta$ as
\begin{align}\label{equ-fa-7}
 \Theta & = -\xi_3 \sum_{i+j=k-1}\xi_1^{2i}\xi_3^{2j} + \xi_1 \sum_{i+j=k-1,j>0}\xi_1^{2i}\xi_3^{2j}+\xi_1^{2k-1}\nonumber\\
 & \quad +\sum_{i+j=2k-1,j>0}\xi_2^i(-\xi_3)^j +\xi_2^{2k-1}\nonumber\\
 & = -\xi_3 \sum_{i+j=k-1}\xi_1^{2i}\xi_3^{2j}+\xi_1^{2k-1}+\xi_2^{2k-1}\nonumber\\
 & \quad + \xi_1 \sum_{i+j=k-1,j>0}\xi_1^{2i}\xi_3^{2j}+\sum_{i+j=2k-1,j>0}\xi_2^i(-\xi_3)^j.
\end{align}
Now we analyse the terms on the right hand side of \eqref{equ-fa-7}. First, since $\xi_1+\xi_2=-\xi_3$, we have
\begin{align}\label{equ-fa-8}
 \xi_1^{2k-1}+\xi_2^{2k-1}=(\xi_1+\xi_2)\sum_{i+j=2k-2}\xi_1^i(-\xi_2)^j=-\xi_3\sum_{i+j=2k-2}\xi_1^i(-\xi_2)^j.
\end{align}
Also,
\begin{align}\label{equ-fa-9}
\sum_{i+j=2k-1,j>0}\xi_2^i(-\xi_3)^j=-\xi_3\sum_{i+j=2k-1,j>0}\xi_2^i(-\xi_3)^{j-1}=-\xi_3\sum_{i+j=2k-2}\xi_2^i(-\xi_3)^{j}.
\end{align}
Finally, a direct computation gives that
\begin{align}\label{equ-fa-10}
& -\xi_3 \sum_{i+j=k-1}\xi_1^{2i}\xi_3^{2j}  + \xi_1 \sum_{i+j=k-1,j>0}\xi_1^{2i}\xi_3^{2j} \nonumber\\
&=-\xi_3\Big( \sum_{i+j=k-1}\xi_1^{2i}\xi_3^{2j} - \sum_{i+j=k-1,j>0}\xi_1^{2i+1}\xi_3^{2j-1} \Big)\nonumber\\
&=-\xi_3\Big(\big(\xi_1^{2k-2}+\xi_1^{2k-4}\xi_3^2+\cdots+\xi_3^{2k-2}\big)- \big(\xi_1^{2k-3}\xi_3 +\xi_1^{2k-5}\xi_3^3+\cdots+\xi_1\xi_3^{2k-3}  \big) \Big)\nonumber\\
&=-\xi_3\Big( \xi_1^{2k-2}-  \xi_1^{2k-3}\xi_3+\xi_1^{2k-4}\xi_3^2-\xi_1^{2k-5}\xi_3^3+\cdots-\xi_1\xi_3^{2k-3}+\xi_3^{2k-2}  \Big)\nonumber\\
&= -\xi_3\sum_{i+j=2k-2}\xi_1^i(-\xi_3)^j.
\end{align}
It follows from \eqref{equ-fa-7}-\eqref{equ-fa-10} that
\begin{align}\label{equ-fa-11}
 \Theta =-\xi_3\sum_{i+j=2k-2}\Big(\xi_1^i(-\xi_2)^j+\xi_1^i(-\xi_3)^j+\xi_2^i(-\xi_3)^j \Big).
\end{align}
Finally, inserting \eqref{equ-fa-11} and \eqref{equ-fa-6} into \eqref{equ-fa-6} gives that
$$
\xi_1^{2k+1}+\xi_2^{2k+1}+\xi_3^{2k+1}=\xi_1\xi_2\xi_3\sum_{i+j=2k-2}\Big(\xi_1^i(-\xi_2)^j+\xi_1^i(-\xi_3)^j+\xi_2^i(-\xi_3)^j \Big),
$$
which completes the proof.
\end{proof}

\begin{lemma}\label{lem-fa-b}
Let $\sigma>0$. If $\xi_1+\xi_2+\xi_3=0$, then
$$
\sum_{k=1}^\infty \frac{(2\sigma)^{2k}}{(2k)!} (\xi_1^{2k+1}+\xi_2^{2k+1}+\xi_3^{2k+1})\lesssim \sigma^{\frac{3}{2}}|\xi_1\xi_2\xi_3|^{\frac{5}{6}}e^{\sigma(|\xi_1|+|\xi_2|+|\xi_3|)}.
$$
\end{lemma}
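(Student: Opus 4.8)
The plan is to start from the factorization in Lemma~\ref{lem-factor}, which already extracts the decisive factor $\xi_1\xi_2\xi_3$, and then to trade away a small power of it against the summation and the exponential weight. Write $M=\max\{|\xi_1|,|\xi_2|,|\xi_3|\}$. The first step I would record is the elementary identity that on the hyperplane $\xi_1+\xi_2+\xi_3=0$ one has
$$
|\xi_1|+|\xi_2|+|\xi_3|=2M .
$$
Indeed, if $M=|\xi_1|$ then $\xi_1=-(\xi_2+\xi_3)$ forces $\xi_2,\xi_3$ to have the same sign (otherwise the smaller of $|\xi_2|,|\xi_3|$ would have to exceed $M$), so $|\xi_2|+|\xi_3|=|\xi_2+\xi_3|=|\xi_1|=M$. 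This identity is crucial, since it guarantees that the weight $e^{2\sigma M}$ I will produce is \emph{exactly} $e^{\sigma(|\xi_1|+|\xi_2|+|\xi_3|)}$, with no factor of $2$ lost in the exponent.

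Next I would insert Lemma~\ref{lem-factor} and bound the inner sum crudely: each monomial $|\xi_a|^i|\xi_b|^j$ with $i+j=2k-2$ is at most $M^{2k-2}$, and there are $3(2k-1)$ of them, so the inner sum is bounded by $3(2k-1)M^{2k-2}$. Summing the series and using $\frac{(2\sigma)^{2k}}{(2k)!}>0$ gives
$$
\Big|\sum_{k\ge1}\frac{(2\sigma)^{2k}}{(2k)!}\big(\xi_1^{2k+1}+\xi_2^{2k+1}+\xi_3^{2k+1}\big)\Big|\le 3|\xi_1\xi_2\xi_3|\sum_{k\ge1}\frac{(2k-1)(2\sigma)^{2k}}{(2k)!}M^{2k-2}=\frac{3|\xi_1\xi_2\xi_3|}{M^2}\,f(2\sigma M),
$$
where $f(x)=x\sinh x-\cosh x+1$; here I use the closed form $\sum_{k\ge1}\frac{(2k-1)x^{2k}}{(2k)!}=x\sinh x-\cosh x+1$, which follows from $x\sinh x=\sum_{k\ge1}\frac{2k\,x^{2k}}{(2k)!}$ and $\cosh x-1=\sum_{k\ge1}\frac{x^{2k}}{(2k)!}$.

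The heart of the matter is then the scalar estimate $f(x)\lesssim x^{3/2}e^{x}$ for all $x\ge0$. This holds because near the origin the cancellation leaves $f(x)\le(1-e^{-1})x^2\le x^{3/2}$ for $x\le1$ (as $f(x)=\sum_{k\ge1}\frac{(2k-1)x^{2k}}{(2k)!}$ is $\tfrac12 x^2$ to leading order), while for $x\ge1$ one has $f(x)\le x\sinh x\le\tfrac12 xe^{x}\le\tfrac12 x^{3/2}e^{x}$. Applying this with $x=2\sigma M$ gives $\frac{3}{M^2}f(2\sigma M)\lesssim \sigma^{3/2}M^{-1/2}e^{2\sigma M}$, so the left-hand side is $\lesssim\sigma^{3/2}\,|\xi_1\xi_2\xi_3|\,M^{-1/2}\,e^{2\sigma M}$. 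Finally I absorb the leftover power of $M$ via $|\xi_1\xi_2\xi_3|\le M^3$, which yields $|\xi_1\xi_2\xi_3|\,M^{-1/2}=|\xi_1\xi_2\xi_3|^{5/6}\big(|\xi_1\xi_2\xi_3|^{1/6}M^{-1/2}\big)\le|\xi_1\xi_2\xi_3|^{5/6}$, and rewrite $e^{2\sigma M}=e^{\sigma(|\xi_1|+|\xi_2|+|\xi_3|)}$ using the identity above; this is precisely the claimed bound.

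I expect the main obstacle to be the scalar inequality $f(x)\lesssim x^{3/2}e^{x}$ — or, more precisely, recognizing \emph{in advance} that the interpolation exponent $3/2$, sitting between the cancellation-driven $x^2$ behaviour at the origin and the $xe^{x}$ growth at infinity, is what the whole estimate hinges on. Once that is in place, the clean exponent $5/6$ comes for free from $|\xi_1\xi_2\xi_3|\le M^3$, and the correct exponential weight from the nontrivial identity $|\xi_1|+|\xi_2|+|\xi_3|=2M$ on the hyperplane; none of the monomial counting is delicate.
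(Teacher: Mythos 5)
Your proof is correct, and it takes a genuinely different route from the paper's after the common starting point of Lemma \ref{lem-factor}. The paper bounds the inner symmetric sum by $2(2k-1)(\xi_1^{2k-2}+\xi_2^{2k-2}+\xi_3^{2k-2})$, peels off the factor $\sigma^{3/2}|\xi_1\xi_2\xi_3|^{5/6}$ by hand, and then controls the remaining series $\Psi$ term by term using $|\xi_1\xi_2\xi_3|^{1/6}\le(|\xi_2|+|\xi_3|)^{1/2}$, the bound $\xi_1^{2k}\le|\xi_1|^k(|\xi_2|+|\xi_3|)^k$, and two applications of Stirling's formula. You instead majorize every monomial by $M^{2k-2}$ with $M=\max_i|\xi_i|$, sum the series in closed form as $f(2\sigma M)$ with $f(x)=x\sinh x-\cosh x+1$, and reduce everything to the one-variable interpolation $f(x)\le x^{3/2}e^{x}$ (quadratic vanishing at the origin versus $xe^x$ growth at infinity), recovering the exponent $5/6$ from $|\xi_1\xi_2\xi_3|^{1/6}\le M^{1/2}$ and the exponential weight from $2M=|\xi_1|+|\xi_2|+|\xi_3|$ on the hyperplane (for which the one-sided inequality $2M\le|\xi_1|+|\xi_2|+|\xi_3|$, immediate from the triangle inequality, already suffices). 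All the individual steps check out: the count $3(2k-1)$ of monomials, the closed form of the series, the two regimes $x\le1$ and $x\ge1$ of the scalar bound, and the sign argument behind $|\xi_2|+|\xi_3|=|\xi_2+\xi_3|$ when $|\xi_1|$ is maximal. Your version is more elementary (no Stirling) and makes the origin of the exponents $3/2$ and $5/6$ transparent; the paper's term-by-term Stirling machinery has the advantage that it is reused almost verbatim for the fractional BBM case in Section 4, where the exponent on $|\xi_1\xi_2\xi_3|$ varies with $\alpha$, though your scalar-interpolation scheme would adapt there as well.
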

\begin{proof}
Thanks to Lemma \ref{lem-factor}, we have
\begin{multline}\label{equ-fa-b-1}
\sum_{k=1}^\infty \frac{(2\sigma)^{2k}}{(2k)!} (\xi_1^{2k+1}+\xi_2^{2k+1}+\xi_3^{2k+1})\\
=\sum_{k=1}^\infty \frac{(2\sigma)^{2k}}{(2k)!} \xi_1\xi_2\xi_3\sum_{i+j=2k-2}\Big(\xi_1^i(-\xi_2)^j+\xi_1^i(-\xi_3)^j+\xi_2^i(-\xi_3)^j \Big).
\end{multline}
Using the elementary inequality
$$
a^ib^j\leq a^{2k-2}+b^{2k-2}, \quad \forall a,b\geq 0, i+j=2k-2,
$$
we infer that
\begin{align}\label{equ-fa-b-2}
\left|\sum_{i+j=2k-2}\Big(\xi_1^i(-\xi_2)^j+\xi_1^i(-\xi_3)^j+\xi_2^i(-\xi_3)^j \Big)\right|\leq 2(2k-1)(\xi_1^{2k-2}+\xi_2^{2k-2}+\xi_3^{2k-2}).
\end{align}
Plugging \eqref{equ-fa-b-2} into \eqref{equ-fa-b-1} yields that
\begin{align}\label{equ-fa-b-3}
&\left|\sum_{k=1}^\infty \frac{(2\sigma)^{2k}}{(2k)!} (\xi_1^{2k+1}+\xi_2^{2k+1}+\xi_3^{2k+1})\right|\nonumber\\
&\leq 2\sum_{k=1}^\infty \frac{(2\sigma)^{2k}}{(2k)!} |\xi_1\xi_2\xi_3|(2k-1)(\xi_1^{2k-2}+\xi_2^{2k-2}+\xi_3^{2k-2})\nonumber\\
&\leq 2\sum_{k=1}^\infty \frac{(2\sigma)^{2k}}{(2k-1)!} |\xi_1\xi_2\xi_3| (\xi_1^{2k-2}+\xi_2^{2k-2}+\xi_3^{2k-2})\nonumber\\
&= 2\sum_{k=0}^\infty \frac{(2\sigma)^{2k+2}}{(2k+1)!} |\xi_1\xi_2\xi_3| (\xi_1^{2k}+\xi_2^{2k}+\xi_3^{2k})\nonumber\\
&=8\sigma^{\frac{3}{2}}|\xi_1\xi_2\xi_3|^{\frac{5}{6}}\Psi(\sigma \xi_1,\sigma \xi_2, \sigma \xi_3),
\end{align}
where
\begin{align}\label{equ-fa-b-4}
\Psi(\xi_1,\xi_2,\xi_3)= \sum_{k=0}^\infty \frac{2^{2k}}{(2k+1)!} |\xi_1\xi_2\xi_3|^{\frac{1}{6}} (\xi_1^{2k}+\xi_2^{2k}+\xi_3^{2k}).
\end{align}

If we can show that
\begin{align}\label{equ-fa-b-5}
\Psi(\xi_1,\xi_2,\xi_3)\lesssim e^{|\xi_1|+|\xi_2|+|\xi_3|},
\end{align}
then by \eqref{equ-fa-b-3}-\eqref{equ-fa-b-4},
$$
\left|\sum_{k=1}^\infty \frac{\sigma^{2k}}{(2k)!} (\xi_1^{2k+1}+\xi_2^{2k+1}+\xi_3^{2k+1})\right|\lesssim \sigma^{\frac{3}{2}}|\xi_1\xi_2\xi_3|^{\frac{5}{6}}e^{\sigma(|\xi_1|+|\xi_2|+|\xi_3|)},
$$
which completes the proof.

So it remains to prove \eqref{equ-fa-b-5}. To this end, we claim that
\begin{align}\label{equ-fa-b-6}
 \sum_{k=0}^\infty \frac{2^{2k}}{(2k+1)!} |\xi_1\xi_2\xi_3|^{\frac{1}{6}}  \xi_1^{2k}\lesssim e^{|\xi_1|+|\xi_2|+|\xi_3|}  \quad \mbox{ for } \xi_1+\xi_2+\xi_3=0.
\end{align}
In fact, if $\xi_1+\xi_2+\xi_3=0$, then
$$
\xi_1^{2k}=|\xi_1|^k|\xi_2+\xi_3|^k\leq |\xi_1|^k(|\xi_2|+|\xi_3|)^k
$$
and
$$
|\xi_1\xi_2\xi_3|^{\frac{1}{6}}=|(\xi_2+\xi_3)\xi_2\xi_3|^{\frac{1}{6}}\leq (|\xi_2|+|\xi_3|)^{\frac{1}{2}}.
$$
Thanks to these inequalities, we have
\begin{align}\label{equ-7-27}
 \sum_{k=0}^\infty \frac{2^{2k}}{(2k+1)!} |\xi_1\xi_2\xi_3|^{\frac{1}{6}}  \xi_1^{2k}&\leq  \sum_{k=0}^\infty \frac{2^{2k}}{(2k+1)!} |\xi_1|^{k} (|\xi_2|+|\xi_3|)^{k+\frac{1}{2}}.
\end{align}
Applying the fact $\max_{x>0}x^{a}e^{-x}=a^ae^{-a}$ with $a=k+\frac{1}{2}$ and $x=|\xi_2|+|\xi_3|$ shows
\begin{align}\label{equ-7-28}
 (|\xi_2|+|\xi_3|)^{k+\frac{1}{2}}  &\leq  \left( \frac{k+\frac{1}{2}}{e} \right)^{k+\frac{1}{2}}e^{|\xi_2|+|\xi_3|}\leq \left( \frac{k+1}{e} \right)^{k+\frac{1}{2}}e^{|\xi_2|+|\xi_3|} \nonumber\\
 & \lesssim \frac{1}{\sqrt{k}} \left( \frac{k+1}{e} \right)^{k+1}e^{|\xi_2|+|\xi_3|} \lesssim \frac{1}{k} \sqrt{k+1}\left( \frac{k+1}{e} \right)^{k+1}e^{|\xi_2|+|\xi_3|}\nonumber\\
 &\lesssim \frac{1}{k}(k+1)!e^{|\xi_2|+|\xi_3|}\lesssim k!e^{|\xi_2|+|\xi_3|},
\end{align}
where we used the Stirling formuala
\begin{align}\label{equ-7-28.5}
k!\sim \sqrt{ k}\left( \frac{k}{e} \right)^k, \quad \forall k \geq 0.
\end{align}
Inserting \eqref{equ-7-28} into \eqref{equ-7-27} and using \eqref{equ-7-28.5} again, we get
\begin{align*}
&\sum_{k=0}^\infty \frac{2^{2k}}{(2k+1)!} |\xi_1\xi_2\xi_3|^{\frac{1}{6}}  \xi_1^{2k}\leq  \sum_{k=0}^\infty \frac{2^{2k}(k!)^2}{(2k+1)!} \frac{|\xi_1|^{k}}{k!} e^{|\xi_2|+|\xi_3|}\\
&\leq \sup_{k\geq 0} \frac{2^{2k}(k!)^2}{(2k+1)(2k)!} \sum_{k\geq 0}\frac{|\xi_1|^{k}}{k!} e^{|\xi_2|+|\xi_3|}\\
&\lesssim  e^{|\xi_1|+|\xi_2|+|\xi_3|}\sup_{k\geq 0} \frac{1}{2k+1}  \frac{2^{2k}k(\frac{k}{e})^{2k}}{\sqrt{2k}(\frac{2k}{e})^{2k}}\\
&=  e^{|\xi_1|+|\xi_2|+|\xi_3|}\sup_{k\geq 0} \frac{k}{\sqrt{2k}(2k+1)}  \lesssim e^{|\xi_1|+|\xi_2|+|\xi_3|},
\end{align*}
which proves \eqref{equ-fa-b-6}.

 Similarly,
\begin{align}\label{equ-fa-b-7}
 \sum_{k=0}^\infty \frac{1}{(2k+1)!} |\xi_1\xi_2\xi_3|^{\frac{1}{6}}  \xi_2^{2k}+ \sum_{k=0}^\infty \frac{1}{(2k+1)!} |\xi_1\xi_2\xi_3|^{\frac{1}{6}}  \xi_3^{2k}\lesssim e^{|\xi_1|+|\xi_2|+|\xi_3|}.
\end{align}
Combining \eqref{equ-fa-b-6}-\eqref{equ-fa-b-7} gives \eqref{equ-fa-b-5}.
\end{proof}

Now we can state the main result in this section.

\begin{proposition}[Almost conservation law]\label{prop-law}
Let $\sigma>0$ and $I$ be the the Fourier multiplier given by \eqref{equ-law-1}.  Let $Iu_0\in H^1(\R)$ and let $Iu\in C([0,\delta];H^1)$ be the local solution obtained in Propostion \ref{prop-loc}. Then the solution of the BBM equation satisfies that
$$
 \sup_{t\in[0,\delta]}\|Iu(t)\|^2_{H^1}\leq \|Iu_0\|^2_{H^1}+C\delta \sigma^{\frac{3}{2}}\|Iu_0\|^3_{H^1}.
$$
\end{proposition}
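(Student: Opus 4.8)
The plan is to integrate the energy identity \eqref{equ-law-5} in time and estimate the resulting cubic term by $C\sigma^{3/2}\|Iu\|^3_{H^1}$, after which the local bound \eqref{bbm-loc-4} closes the argument. First I would integrate \eqref{equ-law-5} over $[0,t]$ for $t\in[0,\delta]$ to obtain
$$
\|Iu(t)\|^2_{H^1}=\|Iu_0\|^2_{H^1}-2\int_0^t(uu_x,I^2u)\,\d\tau ,
$$
so that everything reduces to a pointwise-in-time bound on the trilinear expression $2(uu_x,I^2u)$.

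The heart of the matter is to combine the Fourier representation \eqref{equ-law-9} with Lemma \ref{lem-fa-b}, which together give
$$
|2(uu_x,I^2u)|\lesssim \sigma^{\frac32}\int_{\xi_1+\xi_2+\xi_3=0}|\xi_1\xi_2\xi_3|^{\frac56}\,e^{\sigma(|\xi_1|+|\xi_2|+|\xi_3|)}\prod_{j=1}^3|\widehat u(\xi_j)| .
$$
I would then use \eqref{equ-law-3} in the form $e^{\sigma|\xi|}\le 2m(\xi)$ together with $|\xi_j|^{5/6}\le(1+|\xi_j|)^{5/6}$, and set $F(\xi):=(1+|\xi|)m(\xi)|\widehat u(\xi)|$, so that $\|F\|_{L^2}=\|Iu\|_{H^1}$. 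This reduces the task to the symmetric trilinear estimate
$$
\int_{\xi_1+\xi_2+\xi_3=0}\prod_{j=1}^3(1+|\xi_j|)^{-\frac16}F(\xi_j)\lesssim \|F\|_{L^2}^3 .
$$

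The trilinear estimate is the step I expect to be the main obstacle: a naive Cauchy--Schwarz fails, since the surplus weight $(1+|\xi|)^{-1/6}$ produced by the factor $|\xi_1\xi_2\xi_3|^{5/6}$ of Lemma \ref{lem-fa-b} is \emph{not} square-integrable, so the decay cannot simply be handed to a single frequency. To overcome this I would pass to physical space. Setting $f:=\mathcal{F}^{-1}\big[(1+|\cdot|)^{-1/6}F\big]$, the convolution theorem identifies the integral above (up to a harmless constant) with $\int_\R f^3\,\d x$, which is bounded by $\|f\|^3_{L^3}$. Since $\|f\|_{H^{1/6}}=\|F\|_{L^2}$ and the one-dimensional Sobolev embedding $H^{1/6}(\R)\hookrightarrow L^3(\R)$ holds (because $\tfrac16=\tfrac12-\tfrac13$), we get $\|f\|_{L^3}\lesssim\|F\|_{L^2}$, and hence $|2(uu_x,I^2u)|\lesssim \sigma^{3/2}\|Iu\|^3_{H^1}$. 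The point of the whole computation is precisely that the fractional gain $|\xi_1\xi_2\xi_3|^{5/6}$ (rather than the full cubic weight) is exactly enough to feed this Sobolev exponent.

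Finally I would insert this cubic bound into the time-integrated identity and invoke the local bound \eqref{bbm-loc-4}, namely $\|Iu(\tau)\|_{H^1}\le 2\|Iu_0\|_{H^1}$ for $\tau\in[0,\delta]$, to obtain
$$
\|Iu(t)\|^2_{H^1}\le \|Iu_0\|^2_{H^1}+C\sigma^{\frac32}\delta\,\|Iu_0\|^3_{H^1}
$$
uniformly in $t\in[0,\delta]$ (the factor $2^3$ being absorbed into $C$). Taking the supremum over $t\in[0,\delta]$ yields the stated almost conservation law.
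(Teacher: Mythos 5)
Your proposal is correct and follows essentially the same route as the paper: both reduce the trilinear term via Lemma \ref{lem-fa-b} to a cubed $L^3$ norm of a physical-space function and close it with the one-dimensional Sobolev embedding $H^{1/6}(\R)\hookrightarrow L^3(\R)$, then integrate in time and invoke the local bound. The only cosmetic differences are that the paper keeps the weight as $|\xi|^{5/6}e^{\sigma|\xi|}$ acting on $v=\mathcal{F}^{-1}|\widehat u|$ and uses the homogeneous estimate $\|D^{5/6}e^{\sigma|D|}v\|_{L^3}\lesssim\|De^{\sigma|D|}v\|_{L^2}$, and that it first derives the differential inequality $\frac{\d}{\d t}\|Iu\|^2_{H^1}\leq C\sigma^{3/2}\|Iu\|^3_{H^1}$ before integrating, rather than integrating the energy identity at the outset.
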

\begin{proof}
Then combining \eqref{equ-law-5} and \eqref{equ-law-9} gives that
\begin{multline}\label{equ-law-20}
 \frac{\d}{\d t}\int_\R \Big( (Iu)^2+(\partial_xIu)^2 \Big)\d x\\
 = -\frac{i}{3}\int_{\xi_1+\xi_2+\xi_3=0} \sum_{k=1}^\infty \frac{\sigma^{2k}}{(2k)!} (\xi_1^{2k+1}+\xi_2^{2k+1}+\xi_3^{2k+1}) \widehat{u}(\xi_1) \widehat{u}(\xi_2)\widehat{u}(\xi_3).
\end{multline}
Thanks to Lemma \ref{lem-fa-b}, the right hand side of \eqref{equ-law-20} can be bounded as
\begin{align}\label{equ-law-21}
 \mbox{ RHS of } \eqref{equ-law-20}
 \lesssim \sigma^{\frac{3}{2}}\int_{\xi_1+\xi_2+\xi_3=0}|\xi_1\xi_2\xi_3|^{\frac{5}{6}}e^{\sigma(|\xi_1|+|\xi_2|+|\xi_3|)}
 |\widehat{u}(\xi_1)\widehat{u}(\xi_2)\widehat{u}(\xi_3)|.
\end{align}
Denote by
\begin{align}\label{equ-law-22}
v=\mathcal {F}^{-1} |\widehat{u}(\xi)|
\end{align}
and
\begin{align}\label{equ-law-23}
D^\beta v=\mathcal {F}^{-1} (|\xi|^\beta\widehat{v}(\xi)), \quad e^{\sigma|D|}v=\mathcal {F}^{-1} (e^{\sigma|\xi|}\widehat{v}(\xi)).
\end{align}
Then by Parsaval formula, we rewrite \eqref{equ-law-21} as
\begin{align}\label{equ-law-24}
 \eqref{equ-law-21}
 = \sigma^{\frac{3}{2}}\int_{\R}\Big(D^{\frac{5}{6}}e^{\sigma|D|}v\Big)^3\d x\leq \sigma^{\frac{3}{2}} \|D^{\frac{5}{6}}e^{\sigma|D|}v\|^3_{L^3(\R)}.
\end{align}
By Sobolev embedding theorem, we have
\begin{align}\label{equ-law-25}
\|D^{\frac{5}{6}}e^{\sigma|D|}v\|_{L^3(\R)}\lesssim \|De^{\sigma|D|}v\|_{L^2(\R)}=\||\xi|e^{\sigma|\xi|}|\widehat{u}|\|_{L^2(\R)}\leq \|Iu\|_{H^1},
\end{align}
where we used \eqref{equ-law-22},\eqref{equ-law-23} and the definition of the $I$ operator.

Plugging \eqref{equ-law-24},\eqref{equ-law-25} and \eqref{equ-law-21} into \eqref{equ-law-20} yields that for some numerical constant $C_1>0$
\begin{align}\label{equ-law-26}
 \frac{\d}{\d t}\|Iu\|^2_{H^1} \leq C_1\sigma^{\frac{3}{2}}\|Iu\|^3_{H^1}, \quad t\in [0,\delta].
\end{align}
Integrating \eqref{equ-law-26} with respect to time we obtain
\begin{align}\label{equ-law-27}
 \sup_{t\in[0,\delta]}\|Iu(t)\|^2_{H^1} \leq \|Iu_0\|^2_{H^1}+\int_0^\delta C_1\sigma^{\frac{3}{2}}\|Iu(\tau)\|^3_{H^1}\d \tau.
\end{align}
Thanks to the local well posedness, we have
$$
\sup_{t\in[0,\delta]}\|Iu(\tau)\|_{H^1}\leq C_2\|Iu_0\|_{H^1}.
$$
Then we deduce from \eqref{equ-law-27} that
\begin{align}\label{equ-law-28}
  \sup_{t\in[0,\delta]}\|Iu(t)\|^2_{H^1} \leq \|Iu_0\|^2_{H^1}+ C_1C_2^3\delta\sigma^{\frac{3}{2}}\|Iu_0\|^3_{H^1}.
\end{align}
This completes the proof.
\end{proof}

\begin{remark}\label{rem-Tes}
We give a sketch here that what happens if we follow the argument of Dufera, Mebrate and Tesfahun in \cite{Tes22}. To this end, define another $I$-operator
$$
Iu = \mathcal {F}^{-1}(\cosh(\sigma|\xi|)\widehat{u}(\xi)).
$$
Multiplying \eqref{bbm} with $2I^2u$ and integrating over $\R$ gives that
\begin{align}\label{equ-rem-1}
 \frac{\d}{\d t}\|Iu\|^2_{H^1}=-2(uu_x,I^2u)=-2(uu_x,(I^2-1)u)=(u^2,\partial_x(I^2-1)u),
\end{align}
where we used the fact $\int_\R u^2u_x\d x=0$. By Parseval idenity we have
\begin{align}\label{equ-rem-2}
 (u^2,\partial_x(I^2-1)u)&=\int_{\xi_1+\xi_2+\xi_3=0}i\xi_1(\cosh^2(\sigma|\xi_1|)-1)\widehat{u}(\xi_1)
\widehat{u}(\xi_2)\widehat{u}(\xi_3)\nonumber\\
&=\int_{\xi_1+\xi_2+\xi_3=0}\frac{i}{2}\xi_1(\cosh(2\sigma|\xi_1|)-1)\widehat{u}(\xi_1)
\widehat{u}(\xi_2)\widehat{u}(\xi_3).
\end{align}
Thanks to the inequality
$$
\cosh (\sigma |\xi|)-1\leq (\sigma|\xi|)^{2\alpha} \cosh(\sigma|\xi|), \quad 0\leq \alpha\leq 1,
$$
we deduce from \eqref{equ-rem-1}-\eqref{equ-rem-2} that
\begin{align*}
& \frac{\d}{\d t}\|Iu\|^2_{H^1}\lesssim \int_{\xi_1+\xi_2+\xi_3=0}|\xi_1|(\sigma|\xi_1|)^{2\alpha} \cosh(2\sigma|\xi_1|)|\widehat{u}(\xi_1)
\widehat{u}(\xi_2)\widehat{u}(\xi_3)|\\
&\lesssim \sigma^{2\alpha}\int_{\xi_1+\xi_2+\xi_3=0}|\xi_1|(|\xi_2|^{2\alpha}+|\xi_3|^{2\alpha}) \cosh(\sigma|\xi_1|)\cosh(\sigma|\xi_2|)\cosh(\sigma|\xi_3|)|\widehat{u}(\xi_1)
\widehat{u}(\xi_2)\widehat{u}(\xi_3)|\\
&\lesssim \sigma^{2\alpha} \|DIu\|_{L^2(\R)}\|D^{2\alpha}Iu\|_{L^2(\R)}\|Iu\|_{L^\infty(\R)}\\
&\lesssim \sigma^{2\alpha} \|Iu\|^3_{H^1(\R)}
\end{align*}
provided that $2\alpha\leq 1$. The best conservation law in this way is then
$$
\|Iu(\delta)\|^2_{H^1}\leq \|Iu_0\|^2_{H^1}+C\sigma^{2\alpha}\|Iu_0\|^2_{H^1}.
$$
This implies the analytic radius lower bound $\sigma(t)\geq ct^{-1}$, which is worse than that in  Theorem \ref{thm-1}.
\end{remark}

\section{Generations to fractional BBM equation}

In this section, we apply our argument to more general BBM equations. Let $\alpha$ be a real number such that
\begin{align}\label{equ-frac-1}
\alpha>1.
\end{align}
Consider the fractional BBM equation on the real line
\begin{align}\label{fbbm}
u_t+D^\alpha u_t +  u_x + uu_x=0, \quad u(0,x)=u_0(x).
\end{align}
Here $u$ is a real valued function on $(t,x)\in \R^2$, the fractional differential operator $D^\alpha$ is defined by
$$
D^\alpha u = \mathcal {F}^{-1} (|\xi|^\alpha \widehat{u}(\xi)).
$$
In the case $1<\alpha\leq 2$, it is shown that \eqref{fbbm} is globally well posed \cite{bona-chen09,wang18} in $H^s(\R)$ with $s\geq \max\{0,\frac{3}{2}-\alpha\}$ and ill posed \cite{panthee} in $H^s(\R)$ for  $s< \max\{0,\frac{3}{2}-\alpha\}$. This implies, in particular, that \eqref{fbbm} is globally well posed in $H^{s}(\R)$ with $s\geq \frac{\alpha}{2}$, here we used the fact \eqref{equ-frac-1}. In the sequel, we shall focus on the global well posedness of \eqref{fbbm} in Gevrey spaces $G^{\sigma,s}$ with $s=\frac{\alpha}{2}$. Here and below, for every $\sigma\geq 0$ and $s\in \R$,  $G^{\sigma,s}$  denotes the Banach space endowed with the norm
\begin{align}\label{equ-ana-norm-1}
\|u\|_{G^{\sigma,s}}=\|(1+|\xi|)^se^{\sigma|\xi|}\widehat{u}\|_{L^2(\R)}.
\end{align}
We do not pursue the goal to lower the value of $s$, since the size of $\sigma$ plays a dominant role.
The main result in this section reads as follows.

\begin{theorem}\label{thm-f}
Assume that \eqref{equ-frac-1} holds and $u_0\in G^{\sigma_0,\frac{\alpha}{2}}$ for some $\sigma_0>0$. Then the fractional BBM equation \eqref{fbbm} has a global solution $u\in C([0,\infty);G^{\sigma(t),\frac{\alpha}{2}})$ with
$$
\sigma(t)\geq ct^{-\mu}, \quad t\to \infty,
$$
where $c>0$ depends only on $\sigma_0$ and $\|u_0\|_{G^{\sigma_0,\frac{\alpha}{2}}}$, and
$$
\mu= \left\{
\begin{array}{ll}
\frac{2}{3(\alpha-1)}&\quad \mbox{ if } 1<\alpha<\frac{7}{3},\\
\frac{1}{2}&\quad \mbox{ if } \alpha\geq \frac{7}{3}.
\end{array}
\right.
$$
\end{theorem}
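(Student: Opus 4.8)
The plan is to transcribe the argument of Theorem~\ref{thm-1} (Sections 2 and 3) verbatim wherever only the nonlinearity $uu_x$ is involved, and to re-optimize the single step where the dispersion enters. First I would rewrite \eqref{fbbm} as $u_t+\varphi_\alpha(D)u+\frac12\varphi_\alpha(D)(u^2)=0$, where $\varphi_\alpha(D)=\partial_x(1+D^\alpha)^{-1}$ has symbol $\i\xi(1+|\xi|^\alpha)^{-1}$, and define $I_\sigma$ exactly as in \eqref{equ-law-1}, so that $\|I_\sigma u\|_{H^{\alpha/2}}\sim\|u\|_{G^{\sigma,\alpha/2}}$ by \eqref{equ-law-3}. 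Since $\alpha>1$ gives $\tfrac{\alpha}{2}>\tfrac12$, the space $H^{\alpha/2}(\R)$ is a Banach algebra, and since $\varphi_\alpha(D)$ gains $\alpha-1>0$ derivatives it maps $H^{1-\alpha/2}$ boundedly into $H^{\alpha/2}$; combining these with the submultiplicativity $m(\xi)\le 2m(\xi_1)m(\xi_2)$ yields the analogue of Lemma~\ref{lem-bi}, namely $\|\varphi_\alpha(D)I(uv)\|_{H^{\alpha/2}}\lesssim\|Iu\|_{H^{\alpha/2}}\|Iv\|_{H^{\alpha/2}}$. A contraction argument identical to Proposition~\ref{prop-loc} then gives local well-posedness in $\{Iu\in H^{\alpha/2}\}$ with lifespan $\delta\sim\|Iu_0\|_{H^{\alpha/2}}^{-1}$.

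Next I would derive the energy identity. Multiplying \eqref{fbbm} by $2I^2u$ and integrating, the $u_t$ and $D^\alpha u_t$ terms assemble into $\frac{\d}{\d t}\int((Iu)^2+(D^{\alpha/2}Iu)^2)\d x$, the transport term $u_x$ integrates to zero, and one is left with precisely $-2(uu_x,I^2u)$, the same functional as in \eqref{equ-law-5}. Consequently the Fourier reduction \eqref{equ-law-7}--\eqref{equ-law-9} and the factorization in Lemma~\ref{lem-factor} carry over unchanged, and the whole problem again reduces to estimating $\sum_{k\ge1}\frac{(2\sigma)^{2k}}{(2k)!}(\xi_1^{2k+1}+\xi_2^{2k+1}+\xi_3^{2k+1})$ on $\xi_1+\xi_2+\xi_3=0$.

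The only place the dispersion is felt is the final Sobolev step, and this is where $\mu$ is produced. The key is a one-parameter refinement of Lemma~\ref{lem-fa-b}: for $\tfrac13\le b\le 1$,
\[
\Big|\sum_{k\ge1}\frac{(2\sigma)^{2k}}{(2k)!}(\xi_1^{2k+1}+\xi_2^{2k+1}+\xi_3^{2k+1})\Big|\lesssim \sigma^{3b-1}|\xi_1\xi_2\xi_3|^{b}e^{\sigma(|\xi_1|+|\xi_2|+|\xi_3|)},
\]
of which Lemma~\ref{lem-fa-b} is the case $b=\tfrac56$. I would prove it by the same route: starting from the series $\sum_{k\ge0}\frac{(2\sigma)^{2k+2}}{(2k+1)!}|\xi_1\xi_2\xi_3|(\xi_1^{2k}+\xi_2^{2k}+\xi_3^{2k})$ obtained in the proof of Lemma~\ref{lem-fa-b}, pull out $\sigma^{3b-1}|\xi_1\xi_2\xi_3|^{b}$ and bound the residual series by $e^{\sigma\sum|\xi_i|}$ using $|\xi_1|\le|\xi_2|+|\xi_3|$, the inequality $|\xi_1\xi_2\xi_3|^{1-b}\lesssim(|\xi_2|+|\xi_3|)^{3(1-b)}$, and the $\max_{x>0}x^ae^{-x}=(a/e)^a$ plus Stirling bookkeeping already used for \eqref{equ-fa-b-5}. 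Feeding this into \eqref{equ-law-9}, setting $v=\mathcal F^{-1}|\widehat u|$ and invoking the one-dimensional embedding $\dot H^{1/6}\hookrightarrow L^3$ gives
\[
\mathrm{RHS}\lesssim \sigma^{3b-1}\big\|D^{\,b+1/6}e^{\sigma|D|}v\big\|_{L^2}^3,
\]
which is controlled by $\sigma^{3b-1}\|Iu\|_{H^{\alpha/2}}^3$ precisely when $b+\tfrac16\le\tfrac\alpha2$. Optimizing, I take $b=\min\{1,\tfrac\alpha2-\tfrac16\}$, so the almost conservation law holds with power $\sigma^{p}$, $p=3b-1=\min\{2,\tfrac{3(\alpha-1)}2\}$; iterating local well-posedness and this law exactly as in Proposition~\ref{prop-global} turns a $\sigma^{p}$ law into $\sigma(t)\gtrsim t^{-1/p}$, i.e. $\mu=1/p$, which is the stated exponent.

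The main obstacle, and the only genuinely new point, is the ceiling $b\le1$, which is what freezes $\mu$ at $\tfrac12$ for $\alpha\ge\tfrac73$. This cap is structural rather than technical: Lemma~\ref{lem-factor} supplies exactly one factor $\xi_1\xi_2\xi_3$, so a \emph{symmetric} weight $|\xi_1\xi_2\xi_3|^{b}$ is available only up to $b=1$; trying to extract a larger power of $\sigma$ forces leftover even powers $\xi_j^{2\ell}$ to be retained as polynomial weight concentrated on a single frequency, which breaks the symmetry and overspends the $H^{\alpha/2}$ budget on that one variable, so no gain survives the Sobolev step. I would therefore need to verify that the residual-series estimate in the refined lemma stays uniform down to $b=\tfrac13$ (equivalently $p\downarrow0$ as $\alpha\downarrow1$), and that the constraint $\tfrac\alpha2>\tfrac12$ is precisely what simultaneously secures the algebra property in the local theory and the nonnegativity of the extracted power $\sigma^{3b-1}$ throughout the range $\alpha>1$.
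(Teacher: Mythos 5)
Your architecture coincides with the paper's: the same operator $I_\sigma$, the algebra property of $H^{\alpha/2}$ for the local theory, the same energy identity and factorization (Lemma \ref{lem-factor}), the same choice $b=\min\{1,\tfrac{\alpha}{2}-\tfrac16\}$ (the paper's $\varepsilon_0$ in \eqref{equ-frac-10}), the same power $\sigma^{3b-1}=\sigma^{2-3(1-b)}$, and the same iteration yielding $\mu=1/(3b-1)$. The one genuine gap is exactly the point you flagged as ``to be verified'': the residual-series estimate does \emph{not} stay uniform down to $b=\tfrac13$ if you run the same $\max_{x>0}x^{a}e^{-x}$ plus Stirling bookkeeping as in \eqref{equ-fa-b-5}--\eqref{equ-7-28}. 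Repeating that computation with the weight $|\xi_1\xi_2\xi_3|^{1-b}\le(|\xi_2|+|\xi_3|)^{3(1-b)}$ attached to the term $\xi_1^{2k}\le|\xi_1|^{k}(|\xi_2|+|\xi_3|)^{k}$, the analogue of \eqref{equ-7-28} produces $\Gamma(k+3(1-b)+1)/\sqrt{k}\sim k!\,k^{3(1-b)-1/2}$, and since $\tfrac{4^{k}(k!)^2}{(2k+1)!}\sim k^{-1/2}$ the coefficient of $|\xi_1|^{k}/k!$ is $\sim k^{3(1-b)-1}$, which is unbounded as soon as $b<\tfrac23$, i.e. for $1<\alpha<\tfrac53$. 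The resummed bound then carries a polynomial loss $(1+|\xi_1|)^{2-3b}$ on the (possibly largest) frequency, and a quick derivative count shows the trilinear estimate no longer closes in $H^{\alpha/2}$ in that range; so your refined lemma, as proposed, fails precisely on $1<\alpha<\tfrac53$.

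The paper closes this hole with a sharper combinatorial argument, see \eqref{equ-7-31-2}--\eqref{equ-7-31-7}: it splits $|\xi_1\xi_2\xi_3|^{1-\varepsilon_0}\lesssim 1+|\xi_1\xi_2\xi_3|^{2/3}$, orders $|\xi_1|\ge|\xi_2|\ge|\xi_3|$ so that $\sum_{i+j=2k}|\xi_1|^{i}|\xi_2|^{j}$ reduces to the range $i\ge k$, peels off exactly $|\xi_1|^{k}$, and expands the remaining $(|\xi_2|+|\xi_3|)^{k+2}$ as a genuine binomial sum, taming the multinomial coefficients via the rearrangement inequality $n_2!\,n_3!\le n_1!\,n_4!$ so that the tail resums to $e^{|\xi_2|}e^{|\xi_3|}$ with no polynomial loss. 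You would need this (or an equivalent device) in place of the crude $\max_{x>0}x^{a}e^{-x}$ step; everything else in your outline, including the exponent arithmetic and the cap $b\le1$ coming from the single factor $\xi_1\xi_2\xi_3$ in Lemma \ref{lem-factor}, matches the paper.
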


\begin{remark}
It is proved by Bona and Gruji\'{c} in \cite{BG} that the analytic radius $\sigma(t)$ of solutions to \eqref{fbbm} has the lower bound
\begin{align}\label{equ-f-rem-1}
\sigma(t)\geq ct^{-\frac{1}{\alpha-1}}, \quad t\to \infty.
\end{align}
The approach used in \cite{BG} is different from that in this paper. Clearly, in the case $1<\alpha<\frac{7}{3}$, Theorem \ref{thm-f} is an improvement of \eqref{equ-f-rem-1}. However, if $\alpha$ is large enough, the lower bound in Theorem \ref{thm-f} is worse. Thus, it remains open that whether the approach used in this paper leads to a better bound than \eqref{equ-f-rem-1}.
\end{remark}

The rest of this section is devoted to proving Theorem \ref{thm-f}. Since the main idea is similar to Theorem \ref{thm-1}, we only give a sketch here.

As in Section 2, we let
\begin{align}\label{equ-frac-2}
Iu=\mathcal {F}^{-1}(m(\xi)\widehat{u}(\xi)), \quad m(\xi)=\frac{1}{2}(e^{\sigma \xi}+e^{-\sigma\xi})
\end{align}
and
\begin{align}\label{equ-frac-3}
\varphi(D)=\partial_x(1+D^\alpha)^{-1}.
\end{align}
\begin{lemma}\label{lem-bi-f}
For all $\sigma\geq 0$, we have
$$
\|\varphi(D) I (uv)\|_{H^{\frac{\alpha}{2}}(\R)}\lesssim \|Iu\|_{H^{\frac{\alpha}{2}}(\R)}\|Iv\|_{H^{\frac{\alpha}{2}}(\R)}.
$$
\end{lemma}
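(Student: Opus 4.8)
The plan is to mirror the proof of Lemma \ref{lem-bi}, replacing the integer regularity by $\frac{\alpha}{2}$ and tracking how the smoothing of $\varphi(D)$ in \eqref{equ-frac-3} interacts with the weight $(1+|\xi|)^{\frac{\alpha}{2}}$. The operator $\varphi(D)I$ is a Fourier multiplier with symbol $\frac{i\xi}{1+|\xi|^\alpha}m(\xi)$, so I would first reduce the $H^{\frac{\alpha}{2}}$ estimate to a weighted convolution bound by controlling the deterministic part of the symbol. The elementary inequality
$$
(1+|\xi|)^{\frac{\alpha}{2}}\,\frac{|\xi|}{1+|\xi|^\alpha}\lesssim (1+|\xi|)^{1-\frac{\alpha}{2}}, \qquad \xi\in\R,
$$
(checked separately for $|\xi|\le 1$ and $|\xi|\ge 1$) would give
$$
\|\varphi(D)I(uv)\|_{H^{\frac{\alpha}{2}}(\R)}\lesssim \big\|(1+|\xi|)^{1-\frac{\alpha}{2}}m(\xi)\,\widehat{uv}(\xi)\big\|_{L^2_\xi(\R)}.
$$
This is the analogue of reducing to $\|I(uv)\|_{L^2}$ in Lemma \ref{lem-bi}, with the smoothing exponent now $1-\frac{\alpha}{2}$ rather than $0$.

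Next I would transfer the analytic weight $m$ onto the two factors. Writing $\widehat{uv}(\xi)=\int_{\xi=\xi_1+\xi_2}\widehat{u}(\xi_1)\widehat{v}(\xi_2)$ and invoking the submultiplicative bound \eqref{equ-bi-3}, namely $m(\xi)\le 2m(\xi_1)m(\xi_2)$ for $\xi=\xi_1+\xi_2$, one gets the pointwise domination $|m(\xi)\widehat{uv}(\xi)|\le 2\,\widehat{UV}(\xi)$, where $U=\mathcal{F}^{-1}(m|\widehat{u}|)$ and $V=\mathcal{F}^{-1}(m|\widehat{v}|)$ have nonnegative Fourier transforms. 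Since $\|U\|_{H^{\frac{\alpha}{2}}}=\|Iu\|_{H^{\frac{\alpha}{2}}}$ and likewise for $V$, the desired bound is reduced to the product estimate
$$
\|UV\|_{H^{1-\frac{\alpha}{2}}(\R)}\lesssim \|U\|_{H^{\frac{\alpha}{2}}(\R)}\|V\|_{H^{\frac{\alpha}{2}}(\R)}.
$$

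Finally I would close this product estimate using two facts, both of which hold precisely because of the standing hypothesis \eqref{equ-frac-1}. First, since $\frac{\alpha}{2}>\frac12$, the space $H^{\frac{\alpha}{2}}(\R)$ is a Banach algebra, so $\|UV\|_{H^{\frac{\alpha}{2}}}\lesssim\|U\|_{H^{\frac{\alpha}{2}}}\|V\|_{H^{\frac{\alpha}{2}}}$. Second, because $\alpha\ge 1$ gives $1-\frac{\alpha}{2}\le\frac{\alpha}{2}$, there is the continuous embedding $H^{\frac{\alpha}{2}}(\R)\hookrightarrow H^{1-\frac{\alpha}{2}}(\R)$; chaining the two yields the claim. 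This treats all $\alpha>1$ uniformly, with no case distinction, and reduces to the argument of Lemma \ref{lem-bi} when $\alpha=2$ (there $1-\frac{\alpha}{2}=0$ and the algebra step is just $H^1\hookrightarrow L^\infty$). The main conceptual point, and the only place where the restriction $\alpha>1$ is genuinely used, is this algebra property: it is exactly the derivative gain $\alpha-1$ of $\varphi(D)$, balanced against the regularity $\frac{\alpha}{2}$, that makes the nonlinear term close, so I expect the verification of the algebra/embedding step to be the crux rather than the routine symbol and weight manipulations preceding it.
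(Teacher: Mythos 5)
Your proof is correct and follows essentially the same route as the paper: bound the multiplier $\varphi(D)$, transfer the weight $m$ to the two factors via $m(\xi)\le 2m(\xi_1)m(\xi_2)$, and close with the algebra property of $H^{\frac{\alpha}{2}}(\R)$ (valid since $\frac{\alpha}{2}>\frac12$). The only difference is cosmetic: you track the derivative gain $1-\frac{\alpha}{2}$ from the symbol of $\varphi(D)$ and then immediately discard it through the embedding $H^{\frac{\alpha}{2}}\hookrightarrow H^{1-\frac{\alpha}{2}}$, whereas the paper simply uses that the symbol $\xi(1+|\xi|^\alpha)^{-1}$ is bounded (since $\alpha>1$) and works at the level $\frac{\alpha}{2}$ throughout, arriving at the identical product estimate.
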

\begin{proof}
Since $\alpha>1$, by Plancherel theorem, we have
$$
\|\varphi(D) I (uv)\|_{H^{\frac{\alpha}{2}}(\R)}\lesssim \|I (uv)\|_{H^{\frac{\alpha}{2}}(\R)}.
$$
So it remains to show that
\begin{align}\label{equ-frac-4}
\|I (uv)\|_{H^{\frac{\alpha}{2}}(\R)}\lesssim \|Iu\|_{H^{\frac{\alpha}{2}}(\R)}\|Iv\|_{H^{\frac{\alpha}{2}}(\R)}.
\end{align}
But we know $H^{\frac{\alpha}{2}}(\R)$ is an algebra, namely
\begin{align}\label{equ-frac-5}
\|uv\|_{H^{\frac{\alpha}{2}}(\R)}\lesssim \|u\|_{H^{\frac{\alpha}{2}}(\R)}\|v\|_{H^{\frac{\alpha}{2}}(\R)}.
\end{align}
Then \eqref{equ-frac-4} follows from \eqref{equ-frac-5} and the fact
$$
m(\xi)\leq 2m(\xi_1)m(\xi_2), \quad \mbox{ if } \xi=\xi_1+\xi_2.
$$
This completes the proof.
\end{proof}

With Lemma \ref{lem-bi-f} in hand, we rewrite the fractional BBM equation \eqref{fbbm} as
$$
u(t)=e^{-t\varphi(D)}u_0-\frac{1}{2}\int_0^t e^{-(t-\tau)\varphi(D)}\varphi(D)u^2\d \tau,
$$
and then apply the contraction mapping principle to obtain a unique solution such that
$Iu\in C([0,\delta];H^{\frac{\alpha}{2}}(\R))$ satisfying
\begin{align}\label{equ-frac-6}
\sup_{t\in[0,\delta]}\|Iu(t)\|_{H^{\frac{\alpha}{2}}(\R)}\leq 2\|Iu_0\|_{H^{\frac{\alpha}{2}}(\R)}
\end{align}
with lifespan
\begin{align}\label{equ-frac-7}
\delta \sim \frac{1}{\|Iu_0\|_{H^{\frac{\alpha}{2}}(\R)}}.
\end{align}

Moreover, we have the following conservation law.
\begin{lemma}\label{lem-law-2}
Assume that $Iu_0\in H^{\frac{\alpha}{2}}(\R)$ for some $\sigma>0$. Then the solution of \eqref{fbbm} satisfies that
$$
E(\delta)\leq E(0)+C\delta \sigma^\beta \|Iu_0\|^3_{H^{\frac{\alpha}{2}}(\R)},
$$
where the energy $E(t)=\int_\R (|Iu(t)|^2+|D^{\frac{\alpha}{2}}Iu(t)|^2)\d x$ and
$$
\beta= \left\{
\begin{array}{ll}
\frac{3}{2}(\alpha-1)&\quad \mbox{ if } 1<\alpha<\frac{7}{3},\\
2&\quad \mbox{ if } \alpha\geq \frac{7}{3}.
\end{array}
\right.
$$
\end{lemma}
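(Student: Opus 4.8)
The plan is to run the proof of Proposition \ref{prop-law} essentially verbatim, the only genuinely new input being a one–parameter refinement of Lemma \ref{lem-fa-b}. First I would reduce the energy identity to the same trilinear form. Multiplying \eqref{fbbm} by $2I^2u$ and integrating over $\R$, the two time–derivative contributions $2(u_t,I^2u)+2(D^\alpha u_t,I^2u)$ combine (all factors being self-adjoint Fourier multipliers that commute) into $\frac{\d}{\d t}E(t)$, while the linear transport term vanishes since $(u_x,I^2u)=\frac12\int_\R\partial_x(Iu)^2\,\d x=0$. Hence $\frac{\d}{\d t}E(t)=-2(uu_x,I^2u)$, which is exactly the quantity already analyzed in \eqref{equ-law-5}. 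Consequently the symmetrization computation \eqref{equ-law-6}--\eqref{equ-law-9} carries over unchanged and gives
\[
\frac{\d}{\d t}E(t)=\frac{i}{6}\int_{\xi_1+\xi_2+\xi_3=0}\sum_{k=1}^\infty\frac{(2\sigma)^{2k}}{(2k)!}(\xi_1^{2k+1}+\xi_2^{2k+1}+\xi_3^{2k+1})\,\widehat u(\xi_1)\widehat u(\xi_2)\widehat u(\xi_3).
\]

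The core of the argument is a version of Lemma \ref{lem-fa-b} with a tunable smoothing exponent. Set $\gamma=\min\{\frac\alpha2-\frac16,\,1\}$ and $\beta=3\gamma-1$; note $\gamma=\frac\alpha2-\frac16$ precisely when $\alpha\le\frac73$, so $\beta$ reproduces the stated value in both regimes. Starting from Lemma \ref{lem-factor} I would factor $\xi_1\xi_2\xi_3$ out of the sum and then, instead of the rigid split used for the exponent $\frac56$, write $|\xi_1\xi_2\xi_3|=|\xi_1\xi_2\xi_3|^{\gamma}\,|\xi_1\xi_2\xi_3|^{1-\gamma}$. After the substitution $\eta_j=\sigma\xi_j$, the $\sigma^{2}$ surviving from $(2\sigma)^{2k+2}$ together with the factor $\sigma^{-3(1-\gamma)}$ produced by rescaling the leftover weight shows that the emerging power of $\sigma$ is $2-3(1-\gamma)=3\gamma-1=\beta$, and the residual series becomes $\sigma^\beta|\xi_1\xi_2\xi_3|^\gamma$ times
\[
\sum_{k=0}^\infty\frac{2^{2k+2}}{(2k+1)!}|\eta_1\eta_2\eta_3|^{1-\gamma}(\eta_1^{2k}+\eta_2^{2k}+\eta_3^{2k}).
\]
Exactly as in Lemma \ref{lem-fa-b}, using $\eta_1^{2k}\le|\eta_1|^k(|\eta_2|+|\eta_3|)^k$, the bound $|\eta_1\eta_2\eta_3|^{1-\gamma}\le(|\eta_2|+|\eta_3|)^{3(1-\gamma)}$ on the hyperplane, the inequality $\max_{x>0}x^ae^{-x}=a^ae^{-a}$ with $a=k+3(1-\gamma)$, and Stirling's formula, this series is $\lesssim e^{|\eta_1|+|\eta_2|+|\eta_3|}$ with constant depending on $\alpha$. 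Thus
\[
\Big|\sum_{k=1}^\infty\frac{(2\sigma)^{2k}}{(2k)!}(\xi_1^{2k+1}+\xi_2^{2k+1}+\xi_3^{2k+1})\Big|\lesssim\sigma^\beta|\xi_1\xi_2\xi_3|^\gamma e^{\sigma(|\xi_1|+|\xi_2|+|\xi_3|)}.
\]

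With this bound I would close the estimate as in Proposition \ref{prop-law}. Putting $v=\mathcal F^{-1}|\widehat u|$ and recognizing the symmetric weight, the right-hand side is $\lesssim\sigma^\beta\int_\R(D^\gamma e^{\sigma|D|}v)^3\,\d x\le\sigma^\beta\|D^\gamma e^{\sigma|D|}v\|_{L^3(\R)}^3$. The one-dimensional Sobolev embedding $\dot H^{1/6}(\R)\hookrightarrow L^3(\R)$ then moves the derivative to order $\gamma+\frac16=\min\{\frac\alpha2,\frac76\}\le\frac\alpha2$, and since $|\xi|^{\gamma+1/6}e^{\sigma|\xi|}\lesssim(1+|\xi|)^{\alpha/2}m(\xi)$ by \eqref{equ-law-3}, this is controlled by $\|Iu\|_{H^{\alpha/2}(\R)}$. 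Hence $\frac{\d}{\d t}E\lesssim\sigma^\beta\|Iu\|_{H^{\alpha/2}}^3$; integrating over $[0,\delta]$ and inserting the local bound \eqref{equ-frac-6} yields $E(\delta)\le E(0)+C\delta\sigma^\beta\|Iu_0\|_{H^{\alpha/2}}^3$, as claimed.

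The step I expect to be the real obstacle, and the source of the two-regime formula, is the constraint $\gamma\le1$. The residual series sums to $e^{|\eta_1|+|\eta_2|+|\eta_3|}$ only when the leftover weight $|\eta_1\eta_2\eta_3|^{1-\gamma}$ carries a nonnegative power, i.e.\ $1-\gamma\ge0$; equivalently, the Sobolev step can absorb at most $D^{7/6}$ into $H^{\alpha/2}$, which requires $\alpha\ge\frac73$. This forces $\gamma=1$ and $\beta=2$ once $\alpha\ge\frac73$, so the method cannot push $\beta$ beyond $2$ however large $\alpha$ is. Checking that this truncation is genuinely imposed by the argument (rather than an artifact of a lossy inequality) is the delicate point, and it is exactly what makes the resulting radius worse than \eqref{equ-f-rem-1} for large $\alpha$.
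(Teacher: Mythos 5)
Your reduction to the trilinear form, your choice of $\gamma=\min\{\frac{\alpha}{2}-\frac16,1\}$ (the paper's $\varepsilon_0$), the bookkeeping giving $\sigma^{3\gamma-1}=\sigma^\beta$, and the closing Sobolev and integration steps all match the paper's proof. The gap is precisely in the step you dismiss as going through ``exactly as in Lemma \ref{lem-fa-b}'': the summation of the residual series carrying the leftover weight $|\eta_1\eta_2\eta_3|^{1-\gamma}\le(|\eta_2|+|\eta_3|)^{3(1-\gamma)}$. In Lemma \ref{lem-fa-b} the leftover exponent is $\frac12$, and the argument closes because $\max_{x>0}x^{k+1/2}e^{-x}\lesssim k!$ while $\frac{2^{2k}(k!)^2}{(2k+1)!}\sim\frac{\sqrt k}{2k+1}$ stays bounded. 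Running the same chain with exponent $c=3(1-\gamma)$ gives $\max_{x>0}x^{k+c}e^{-x}\sim k!\,k^{c-\frac12}$, so the coefficient of $\frac{|\eta_1|^k}{k!}$ behaves like $k^{-\frac12}\cdot k^{c-\frac12}=k^{c-1}$. This is bounded only for $c\le1$, i.e. $\alpha\ge\frac53$. For $1<\alpha<\frac53$ you get $\sum_k k^{c-1}\frac{|\eta_1|^k}{k!}\sim(1+|\eta_1|)^{c-1}e^{|\eta_1|}$, and the polynomial prefactor destroys the comparison with $m(\xi_1)m(\xi_2)m(\xi_3)$ on which the closing estimate rests: absorbing it exponentially changes $\sigma$, and absorbing it as a derivative weight pushes the burden on the first factor to $2-2\gamma=\frac73-\alpha>\frac\alpha2$ near $\alpha=1$. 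So the claimed bound $\lesssim e^{|\eta_1|+|\eta_2|+|\eta_3|}$ does not follow from your argument on part of the stated range.

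This is exactly why the paper's proof of Lemma \ref{lem-law-2} is more involved than Lemma \ref{lem-fa-b}. In \eqref{equ-7-31-0}--\eqref{equ-7-31-7} it reduces the leftover weight to the worst case $|\xi_1\xi_2\xi_3|^{2/3}\le(|\xi_2|+|\xi_3|)^2$, orders the frequencies, keeps the full sum $\sum_{i+j=2k}$ rather than invoking the crude bound \eqref{equ-fa-b-2}, folds $(|\xi_2|+|\xi_3|)^2$ into that sum, expands binomially, and uses the factorial inequality $n_2!n_3!\le n_1!n_4!$ to reach $(k+2)!\,|\xi_1|^k\sum_{i+j=k+2}\frac{|\xi_3|^i}{i!}\frac{|\xi_2|^j}{j!}$ as in \eqref{equ-7-31-5}. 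Only the $|\xi_1|^k$ factor is then handled by the max trick; the remaining double sum is a Cauchy product of exponential series and yields $e^{|\xi_2|+|\xi_3|}$ with no polynomial loss. You need this (or an equivalent lossless device) to cover $1<\alpha<\frac53$; as written your argument proves the lemma only for $\alpha\ge\frac53$. Finally, the obstacle you single out at the end --- whether the cap $\gamma\le1$ is intrinsic --- is not where the difficulty sits: that cap enters harmlessly through the Sobolev embedding, whereas the delicate point is the series summation just described.
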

\begin{proof}
Similar to \eqref{equ-law-20} we have
\begin{align}\label{equ-frac-8}
 \frac{\d}{\d t}E(t)
 = -\frac{i}{3}\int_{\xi_1+\xi_2+\xi_3=0} \sum_{k=1}^\infty \frac{(2\sigma)^{2k}}{(2k)!} (\xi_1^{2k+1}+\xi_2^{2k+1}+\xi_3^{2k+1}) \widehat{u}(\xi_1) \widehat{u}(\xi_2)\widehat{u}(\xi_3).
\end{align}
If $\xi_1+\xi_2+\xi_3=0$, then by Lemma \ref{lem-factor},
\begin{align}\label{equ-frac-9}
&\left|\sum_{k=1}^\infty \frac{(2\sigma)^{2k}}{(2k)!} (\xi_1^{2k+1}+\xi_2^{2k+1}+\xi_3^{2k+1})\right|\nonumber\\
&\lesssim \left| \sum_{k=1}^\infty \frac{\sigma^{2k}}{(2k)!} \xi_1\xi_2\xi_3\sum_{i+j=2k-2}\Big(\xi_1^i(-\xi_2)^j+\xi_1^i(-\xi_3)^j+\xi_2^i(-\xi_3)^j\Big)\right|\nonumber\\
%&\lesssim \sum_{k=1}^\infty \frac{\sigma^{2k}}{(2k)!} |\xi_1\xi_2\xi_3|(2k-1)(|\xi_1|^{2k-2}+|\xi_2|^{2k-2}+|\xi_3^{2k-2}|)\\
&\lesssim \sum_{k=0}^\infty \frac{(2\sigma)^{2k+2}}{(2k+2)!} |\xi_1\xi_2\xi_3|\sum_{i+j=2k}\left|\Big(\xi_1^i(-\xi_2)^j+\xi_1^i(-\xi_3)^j+\xi_2^i(-\xi_3)^j\Big)\right|\nonumber\\
&=4\sigma^{2-3(1-\varepsilon_0)}|\xi_1\xi_2\xi_3|^{\varepsilon_0}\Psi(\sigma \xi_1,\sigma \xi_2, \sigma \xi_3)
\end{align}
where
\begin{align}\label{equ-frac-10}
\varepsilon_0=\min\left\{\frac{\alpha}{2}-\frac{1}{6}, 1\right\},
\end{align}
and
\begin{align}\label{equ-frac-11}
\Psi(\xi_1, \xi_2, \xi_3)=\sum_{k=0}^\infty \frac{2^{2k}}{(2k+2)!} |\xi_1\xi_2\xi_3|^{1-\varepsilon_0}\sum_{i+j=2k}\left|\Big(\xi_1^i(-\xi_2)^j+\xi_1^i(-\xi_3)^j+\xi_2^i(-\xi_3)^j\Big)\right|.
\end{align}

Since $\alpha>1$, by definition \eqref{equ-frac-10}, we see $1-\varepsilon_0\in[0,2/3)$, thus
\begin{align}\label{equ-7-31-0}
|\xi_1\xi_2\xi_3|^{1-\varepsilon_0}\lesssim 1 + |\xi_1\xi_2\xi_3|^{\frac{2}{3}}.
\end{align}
Similar to \eqref{equ-fa-b-5}, one has
$$
\sum_{k=0}^\infty \frac{2^{2k}}{(2k+2)!} \sum_{i+j=2k}\left|\Big(\xi_1^i(-\xi_2)^j+\xi_1^i(-\xi_3)^j+\xi_2^i(-\xi_3)^j\Big)\right|\lesssim e^{|\xi_1|+|\xi_2|+|\xi_3|}.
$$
By definition \eqref{equ-frac-11}, we have the bound
\begin{align}\label{equ-7-31-1}
\Psi(\xi_1, \xi_2, \xi_3)\lesssim e^{|\xi_1|+|\xi_2|+|\xi_3|}.
\end{align}
provided we show that
\begin{align}\label{equ-7-31-2}
\sum_{k=0}^\infty \frac{2^{2k}}{(2k+2)!} |\xi_1\xi_2\xi_3|^{\frac{2}{3}} \sum_{i+j=2k}\left|\Big(\xi_1^i(-\xi_2)^j+\xi_1^i(-\xi_3)^j+\xi_2^i(-\xi_3)^j\Big)\right|\lesssim e^{|\xi_1|+|\xi_2|+|\xi_3|}.
\end{align}

To estimate \eqref{equ-7-31-2}, without loss of generality, we assume $|\xi_1|\geq |\xi_2|\geq |\xi_3|$. Note that $\xi_1+\xi_2+\xi_3=0$, 
$$
|\xi_1\xi_2\xi_3|^{\frac{2}{3}}\leq |(|\xi_2|+|\xi_3|)\xi_2\xi_3|^{\frac{2}{3}}\leq(|\xi_2|+|\xi_3|)^2,
$$
thus the left hand side of \eqref{equ-7-31-2} can be bounded by
\begin{align}\label{equ-7-31-3}
\mbox{ LHS of } \eqref{equ-7-31-2} &\lesssim \sum_{k=0}^\infty \frac{2^{2k}}{(2k+2)!}  (|\xi_2|+|\xi_3|)^2 \sum_{i+j=2k}|\xi_1|^i|\xi_2|^j\nonumber\\
&\lesssim \sum_{k=0}^\infty \frac{2^{2k}}{(2k+2)!}  (|\xi_2|+|\xi_3|)^2 \sum_{i+j=2k,i\geq k}|\xi_1|^i|\xi_2|^j,
\end{align}
where in the last step we used the fact $|\xi_1|\geq |\xi_2|$. For $i\geq k$ and $\xi_1+\xi_2+\xi_3=0$, we rewrite
\begin{align}\label{equ-7-31-4}
  &(|\xi_2|+|\xi_3|)^2\sum_{i+j=2k,i\geq k}|\xi_1|^i|\xi_2|^j\nonumber\\
  &=(|\xi_2|+|\xi_3|)^2|\xi_1|^k\sum_{i+j=2k,i\geq k}|\xi_1|^{i-k}|\xi_2|^j\nonumber\\
  &\leq (|\xi_2|+|\xi_3|)^2 |\xi_1|^k\sum_{i+j=k}|\xi_1|^{i}|\xi_2|^j\nonumber\\
  & \leq |\xi_1|^k\sum_{i+j=k}(|\xi_2|+|\xi_3|)^{i+2}|\xi_2|^j\nonumber\\
  &\leq |\xi_1|^k\sum_{i+j=k+2}(|\xi_2|+|\xi_3|)^{i}|\xi_2|^j\nonumber\\
  & \leq  |\xi_1|^k\sum_{i+j=k+2}\sum_{\ell_1+\ell_2=i}\frac{i!}{\ell_1!\ell_2!}|\xi_3|^{\ell_1}|\xi_2|^{j+\ell_2}.
\end{align}
Using the elementary inequality (see \cite[Lemma 3.4]{wangm19})
$$
n_2!n_3!\leq n_1!n_4!,\quad \forall 0\leq n_1\leq n_2\leq n_3\leq n_4, n_1+n_4=n_2+n_3
$$
we have
$$
\frac{i!}{\ell_1!\ell_2!}\leq \frac{(k+2)!}{\ell_1!(j+\ell_2)!}.
$$
This, together with \eqref{equ-7-31-4}, gives
\begin{align}\label{equ-7-31-5}
  (|\xi_2|+|\xi_3|)^2\sum_{i+j=2k,i\geq k}|\xi_1|^i|\xi_2|^j  & \leq  |\xi_1|^k\sum_{i+j=k+2}\sum_{\ell_1+\ell_2=i}\frac{(k+2)!}{\ell_1!(j+\ell_2)!}|\xi_3|^{\ell_1}|\xi_2|^{j+\ell_2}\nonumber\\
  &=(k+2)!|\xi_1|^k\sum_{i+j=k+2} \frac{|\xi_3|^i}{i!}\frac{|\xi_2|^j}{j!}.
\end{align}
Combining \eqref{equ-7-31-3} and \eqref{equ-7-31-5}, we obtain
\begin{align}\label{equ-7-31-6}
\mbox{ LHS of } \eqref{equ-7-31-2} &\lesssim \sum_{k=0}^\infty \frac{2^{2k}}{(2k+2)!} (k+2)!|\xi_1|^k\sum_{i+j=k+2} \frac{|\xi_3|^i}{i!}\frac{|\xi_2|^j}{j!}\nonumber\\
&\lesssim \sum_{k=0}^\infty \frac{2^{2k}}{(2k)!} k!|\xi_1|^k\sum_{i+j=k+2} \frac{|\xi_3|^i}{i!}\frac{|\xi_2|^j}{j!}.
\end{align}
By Stirling formuala \eqref{equ-7-28.5} we have
$$
\frac{2^{2k}}{(2k)!} k!\lesssim \frac{\sqrt{k}}{k!}
$$
and similar to \eqref{equ-7-28} we have
$$
\frac{\sqrt{k}}{k!}|\xi_1|^k\lesssim e^{|\xi_1|}
$$
then we deduce from \eqref{equ-7-31-6} that
\begin{align}\label{equ-7-31-7}
\mbox{ LHS of } \eqref{equ-7-31-2} &\lesssim e^{|\xi_1|}\sum_{k=0}^\infty  \sum_{i+j=k+2}  \frac{|\xi_3|^i}{i!}\frac{|\xi_2|^j}{j!}\leq e^{|\xi_1|}\sum_{k=2}^\infty  \sum_{i+j=k}  \frac{|\xi_3|^i}{i!}\frac{|\xi_2|^j}{j!}\nonumber\\ 
&\leq e^{|\xi_1|}\left(\sum_{i=0}^\infty\frac{|\xi_3|^i}{i!}\right) \left(\sum_{j=0}^\infty\frac{|\xi_2|^j}{j!}\right)=e^{|\xi_1|+|\xi_2|+|\xi_3|}. 
\end{align}

This proves \eqref{equ-7-31-2} and thus \eqref{equ-7-31-1} holds. Then it follows from \eqref{equ-frac-8}, \eqref{equ-frac-9} and \eqref{equ-frac-11} that
\begin{align}\label{equ-frac-12}
 \frac{\d}{\d t}E(t)
 &\lesssim \sigma^{2-3(1-\varepsilon_0)}\int_{\xi_1+\xi_2+\xi_3=0}|\xi_1\xi_2\xi_3|^{\varepsilon_0}e^{\sigma(|\xi_1|+|\xi_2|+|\xi_3|)} |\widehat{u}(\xi_1) \widehat{u}(\xi_2)\widehat{u}(\xi_3)|\nonumber\\
 &\lesssim \sigma^{2-3(1-\varepsilon_0)} \|D^{\varepsilon_0}Iv\|^3_{L^3(\R)}\lesssim \sigma^{2-3(1-\varepsilon_0)} \|Iu\|^3_{H^{\frac{\alpha}{2}}(\R)},
\end{align}
where $v=\mathcal {F}^{-1}|\widehat{u}|$, and we used the Sobolev embedding (since \eqref{equ-frac-10})
$$
\|D^{\varepsilon_0}u\|_{L^3(\R)}\lesssim \|u\|_{H^{\frac{\alpha}{2}}(\R)}.
$$
Integrating \eqref{equ-frac-12} over $[0,\delta]$, using the local bound \eqref{equ-frac-6}, noting
$$
2-3(1-\varepsilon_0)= \left\{
\begin{array}{ll}
\frac{3}{2}(\alpha-1)&\quad \mbox{ if } 1<\alpha<\frac{7}{3},\\
2&\quad \mbox{ if } \alpha\geq \frac{7}{3}.
\end{array}
\right.
$$
we conclude the desired bound.
\end{proof}

Thanks to Lemma \ref{lem-law-2} and the local well posedness, similar to the proof of Theorem \ref{thm-1}, we find that the solution of \eqref{fbbm} satisfies
$$
u\in C([0,\infty); G^{\sigma(t),\frac{\alpha}{2}})
$$
with
$$
\sigma(t)\geq ct^{-\frac{1}{\beta}}, \quad t\to \infty,
$$
where $\beta$ is the same as that in Lemma \ref{lem-law-2}. This proves Theorem \ref{thm-f}.

\section*{Acknowledgements}

This work is partially supported by the National Natural Science Foundation of China under grant No.12171442, and the Fundamental Research Funds for the Central Universities, China University of Geosciences(Wuhan) under grant No.CUGSX01.

\end{document}